\chardef\bslash=`\\ 
\def\verbatim{\interlinepenalty\@M \@verbatim
  \leftskip\@totalleftmargin\advance\leftskip2pc
  \frenchspacing\@vobeyspaces \@xverbatim}
\newtheorem{thm}{Theorem}[section]
\newtheorem{cor}[thm]{Corollary}
\newtheorem{rem}[thm]{Remark} 
\newtheorem{ques}[thm]{Question}
\numberwithin{equation}{section}
\newcommand{\ZZ}{{\mathbb Z}}
\newcommand{\RR}{{\mathbb R}}
\newcommand{\R}{ {\Bbb R}}
\newcommand{\Z} {{\Bbb Z}}
\begin{document}

\title[Differential Subalgebras and Norm-Controlled Inversion] {Differential subalgebras and norm-controlled inversion}

\author{Chang Eon Shin}
\address{Department of Mathematics, Sogang University, Seoul 121-742, Korea.
Email: shinc@sogang.ac.kr }

\author{Qiyu Sun}

\address{Department of Mathematics, University of Central Florida, Orlando, FL 32816, USA.  Email: qiyu.sun@ucf.edu
}

\thanks{The project is partially supported by   the Basic
Science Research Program through the National Research Foundation of Korea (NRF) funded by the Ministry of
Education, Science and Technology (NRF-2016R1D1A1B03930571) and
the National Science Foundation (DMS-1816313) }




\dedicatory{In  memory of Ronald G. Douglas}

\begin{abstract} In this paper, we consider the
norm-controlled inversion for  differential  $*$-subalgebras of a symmetric $*$-algebra with common identity and involution.
\end{abstract}

\maketitle

\section{Introduction}

In \cite[Lemma IIe]{wiener32}, it states that
``{\em If $f(x)$ is a function with an absolutely convergent Fourier series, which nowhere vanishes for real arguments, $1/f(x)$ has an absolutely convergent Fourier series.}"
The above statement is now known as the  classical Wiener's lemma.

We say that a Banach space ${\mathcal A}$  with norm $\|\cdot\|_{\mathcal A}$ 
is  a {\em  Banach algebra} if  
it has operation of multiplications possessing the usual algebraic properties, and
\begin{equation}\label{banachalgebra.def}
\|AB\|_{\mathcal A}\le   K \|A\|_{\mathcal A}\|B\|_{\mathcal A}\ \ {\rm for\ all} \ A, B\in {\mathcal A},
\end{equation}
where $K$ is a positive constant. 
Given two Banach algebras ${\mathcal A}$ and ${\mathcal B}$ such that ${\mathcal A}$ is a Banach subalgebra of ${\mathcal B}$,
we say that
 ${\mathcal A}$ is {\it inverse-closed} in ${\mathcal B}$ if $A\in {\mathcal A}$ and
$A^{-1} \in {\mathcal B}$ implies $A^{-1}\in {\mathcal A}$.  Inverse-closedness is also known as  spectral invariance, Wiener pair, local subalgebra, etc  
 \cite{douglasbook, gelfandbook, Naimarkbook, takesaki}.
Let  ${\mathcal C}$ be the algebra of all periodic continuous functions  under multiplication, and
${\mathcal W}$  be its Banach subalgebra of all periodic functions with  absolutely convergent Fourier series,
\begin{equation}\label{Wieneralgebra.def}
{\mathcal W}=\Big\{f(x)=\sum_{n\in \ZZ} \hat f(n) e^{inx},\ \
 \|f\|_{\mathcal W}:=\sum_{n\in \ZZ} |\hat f(n)|<\infty\Big\}.\end{equation}
Then the classical Wiener's lemma can be  reformulated as that  ${\mathcal W}$ is an inverse-closed subalgebra of ${\mathcal C}$.
 Due to the above interpretation, we  also call the inverse-closed property for a Banach subalgebra ${\mathcal A}$ as Wiener's lemma for that subalgebra.
 Wiener's lemma for Banach algebras of infinite matrices and integral operators with certain off-diagonal decay
 can be informally interpreted as localization preservation under inversion.
Such a localization preservation is of great importance in  applied harmonic analysis, numerical analysis,  optimization
and many mathematical and engineering fields  \cite{akramgrochenigsiam, chengsun19, christensen05, grochenigr10,  ksw13, sunsiam06}.
  The readers may refer to the survey papers \cite{grochenig10, Krishtal11, shinsun13}, the recent  papers \cite{fangshinsun20, samei19, shinsun19} and  references therein for  historical remarks and recent advances.

Given an element $A$ in a Banach algebra ${\mathcal A}$ with the identity $I$, we define its  {\em spectral set} $\sigma_{\mathcal{A}}(A)$
and  {\em spectral radius} $\rho_{\mathcal{A}}(A)$  by
$$
\sigma_{\mathcal{A}}(A):=\big\{\lambda \in \mathbb{C} : \lambda I -A \text{ is not invertible in }
\mathcal{A} \big\}
$$
and
$$
\rho_\mathcal{A}(A) := \max \big\{ |\lambda| :\lambda  \in \sigma_{\mathcal A}(A)\big\}
$$
respectively.
 Let ${\mathcal A}$  and
${\mathcal B}$ be Banach algebras with common identity $I$ and   ${\mathcal A}$ be a Banach subalgebra of ${\mathcal B}$.
Then an equivalent condition for the inverse-closedness of
${\mathcal A}$  in ${\mathcal B}$ is that the spectral set of any
$A\in {\mathcal A}$ in  Banach algebras ${\mathcal A}$ and ${\mathcal B}$ are the same, i.e.,
$$
\sigma_{\mathcal A}(A)=\sigma_{\mathcal B}(A).
$$
By the above equivalence, a necessary condition for the inverse-closedness of
${\mathcal A}$  in ${\mathcal B}$ is that
the spectral radius of any
$A\in {\mathcal A}$ in the Banach algebras ${\mathcal A}$ and ${\mathcal B}$ are the same, i.e.,
\begin{equation} \label{spectralradius}
\rho_\mathcal{A}(A) =\rho_\mathcal{B}(A).
\end{equation}
The above necessary condition is shown  by Hulanicki \cite{hulanicki} to be sufficient if we further assume that
$\mathcal{A}$  and $\mathcal{B}$ are $*$-algebras with common identity and involution, and that
 $\mathcal{B}$ is symmetric.
  Here we say that
 a Banach algebra $\mathcal B$ is   a $*$-algebra if
there is a  continuous  linear {\em  involution $*$} on  $\mathcal {B}$
with the properties that
\begin{equation*}
(AB)^* = B^* A^*\  \text{ and }\  A^{**} = A\  \text{ for all }  A,  B \in
{\mathcal B},
\end{equation*}
and that a $*$-algebra ${\mathcal B}$ is  {\em symmetric} if
$$\sigma_{\mathcal {A}} (A^* A) \subset [0,\infty )\ \ {\rm for \ all} \  A\in {\mathcal B}.$$
The spectral radii approach \eqref{spectralradius}, known as  the Hulanicki's spectral method,
  has been used to establish the inverse-closedness of symmetric $*$-algebras \cite{branden, gkI, grochenigklotz10,  sunca11,  suntams07, suncasp05},
  however the above approach does not provide a norm estimate for the inversion, which is crucial for  many mathematical and engineering  applications.

To consider norm estimate for the inversion, we recall the concept of norm-controlled inversion   of a Banach subalgebra  ${\mathcal A}$ of a symmetric  $*$-algebra  ${\mathcal B}$, which was initiated by Nikolski \cite{nikolski99} and coined by Gr\"ochenig and Klotz \cite{gkI}. Here
we say that
 a Banach subalgebra ${\mathcal A}$ of ${\mathcal B}$  admits {\em
norm-controlled inversion} in ${\mathcal B}$ if there exists a continuous function $h$ from
$[0, \infty)\times [0, \infty)$ to $[0, \infty)$
 such that
\begin{equation}\label{normcontrol}
\|A^{-1}\|_{\mathcal A}\le h\big(\|A\|_{\mathcal A}, \|A^{-1}\|_{\mathcal B}\big)
\end{equation}
for all $A\in {\mathcal A}$ being invertible in ${\mathcal B}$
\cite{gkII, gkI,  samei19, shinsun19}.

The norm-controlled inversion is  a strong version of Wiener's lemma.
The classical Banach algebra ${\mathcal W}$   in \eqref{Wieneralgebra.def}  is inverse-closed in
the algebra
 ${\mathcal C}$ of  all periodic continuous functions
  \cite{wiener32},  however it does not have norm-controlled inversion in
 ${\mathcal C}$ \cite{belinskiijfaa97, nikolski99}.
To establish Wiener's lemma, there are several methods, including
 the Wiener's localization \cite{wiener32}, the Gelfand's technique
\cite{gelfandbook},   the Brandenburg's trick \cite{branden},  the Hulanicki's spectral method \cite{hulanicki}, the Jaffard's  boot-strap argument \cite{jaffard90},
the derivation technique \cite{grochenigklotz10}, 
and the Sj\"ostrand's commutator estimates \cite{shinsun19, sjostrand94}.
 In this  paper, we will use the Brandenburg's trick
 to establish norm-controlled inversion of
 a differential $*$-subalgebra  ${\mathcal A}$ of a symmetric  $*$-algebra  ${\mathcal B}$.

This introduction article is organized as follows. In Section \ref{differentialalgebra.section}, we recall the concept of
differential subalgebras and present some differential subalgebras of  infinite matrices with polynomial off-diagonal decay.
In Section \ref{generalizedDS.section}, we introduce  the concept of generalized differential  subalgebras and
present some generalized differential subalgebras of integral operators with kernels being  H\"older continuous and having polynomial off-diagonal decay.
In Section \ref{normcontrolledinversion.section},  we use the Brandenburg's trick  to establish norm-controlled inversion
  of   a differential $*$-subalgebra of a symmetric  $*$-algebra, and we conclude the section with two remarks on
 the  norm-controlled inversion with the norm control function bounded by a polynomial and
  the norm-controlled inversion of nonsymmetric  Banach algebras.

\section{Differential Subalgebras}\label{differentialalgebra.section}

 Let ${\mathcal A}$  and
${\mathcal B}$ be Banach algebras such that ${\mathcal A}$ is a Banach subalgebra of  ${\mathcal B}$.
We say that
${\mathcal A}$ is  a {\em differential subalgebra of order $\theta\in (0, 1]$} in ${\mathcal B}$  
if there exists a positive constant $D_0:=D_0({\mathcal A}, {\mathcal B}, \theta)$ such that
\begin{equation}\label{differentialnorm.def}
\|AB\|_{\mathcal A}\le D_0\|A\|_{\mathcal A} \|B\|_{\mathcal A} \Big (\Big(\frac{\|A\|_{\mathcal B}}{\|A\|_{\mathcal A}}\Big)^\theta +
\Big(\frac{\|B\|_{\mathcal B}}{\|B\|_{\mathcal A}}\Big)^\theta
\Big)
\quad {\rm for \ all} \  A, B \in {\mathcal A}.
\end{equation}
The concept of   differential subalgebras of order $\theta$  was introduced in \cite{blackadarcuntz91, kissin94, rieffel10} for $\theta=1$ and
\cite{christ88, gkI,     shinsun19} for $\theta\in (0, 1)$.
 We also refer the reader
to \cite{barnes87, fangshinsun13, gkII, gkI,   grochenigklotz10,   jaffard90, rssun12, samei19, sunca11, sunacha08, suntams07, suncasp05} for various differential subalgebras
of  infinite  matrices, convolution operators, and  integral operators with certain off-diagonal decay.

 For $\theta=1$, the  requirement
\eqref{differentialnorm.def} can be reformulated as
\begin{equation}
\|AB\|_{\mathcal A}\le D_0\|A\|_{\mathcal A} \|B\|_{\mathcal B}+ D_0 \|A\|_{\mathcal B} \|B\|_{\mathcal A}
\quad {\rm for \ all} \  A, B \in {\mathcal A}. \end{equation}
So the  norm $\|\cdot\|_{\mathcal A}$  satisfying \eqref{differentialnorm.def} 
is also referred  as a Leibniz norm on ${\mathcal A}$.

   Let $C[a, b]$ be the space of  all continuous functions on the interval $[a, b]$  with its  norm defined by
$$\|f\|_{C[a, b]}=\sup_{t\in [a, b]} |f(t)|, \ \  f\in C[a, b],$$
 and $C^k[a, b], k\ge 1$, be the space of all continuously  differentiable functions on the interval $[a, b]$ up to order $k$
with its norm defined by
$$\|h\|_{C^k[a, b]}= \sum_{j=0}^k \|h^{(j)}\|_{C[a, b]} \ {\rm for} \  h\in C^k[a, b].$$
Clearly,  $C[a, b]$  and $C^k[a, b]$ are Banach algebras under function multiplication.
Moreover  
  \begin{eqnarray}\label{Cab.eq}
\|h_1h_2\|_{C^1[a,b]} & = & \|(h_1h_2)'\|_{C[a, b]}+ \|h_1h_2\|_{C[a, b]}\nonumber\\
& \le &
\|h_1'\|_{C[a, b]} \|h_2\|_{C[a, b]}+ \|h_1\|_{C[a, b]} \|h_2'\|_{C[a, b]}
 + \|h_1\|_{C[a, b]}\|h_2\|_{C[a, b]}\nonumber\\
& \le &  \|h_1\|_{C^1[a,b]} \|h_2\|_{C[a,b]}+\|h_1\|_{C[a,b]}\|h_2\|_{C^1[a,b]} \ {\rm for \ all} \ h_1, h_2\in C^1[a, b],
\end{eqnarray}
where the second inequality follows from the Leibniz rule.  Therefore we have

\begin{thm}\label{C1ab.thm}
 $C^1[a, b]$ is a differential subalgebra of order one in $C[a, b]$.
 \end{thm}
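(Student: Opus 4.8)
The plan is to verify the defining inequality \eqref{differentialnorm.def} with $\theta=1$, ${\mathcal A}=C^1[a,b]$ and ${\mathcal B}=C[a,b]$, taking the constant $D_0=1$. As noted in the excerpt, for $\theta=1$ the differential-subalgebra condition is equivalent to the Leibniz-type estimate
\begin{equation*}
\|h_1h_2\|_{C^1[a,b]}\le D_0\|h_1\|_{C^1[a,b]}\|h_2\|_{C[a,b]}+D_0\|h_1\|_{C[a,b]}\|h_2\|_{C^1[a,b]},
\end{equation*}
so the entire content of the theorem is precisely the chain of inequalities \eqref{Cab.eq} already displayed: expand $\|h_1h_2\|_{C^1}=\|(h_1h_2)'\|_{C}+\|h_1h_2\|_{C}$, apply the Leibniz rule $(h_1h_2)'=h_1'h_2+h_1h_2'$ together with the submultiplicativity $\|fg\|_{C}\le\|f\|_{C}\|g\|_{C}$ of the sup norm, and regroup. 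Hence the proof is essentially a one-line citation of \eqref{Cab.eq}, after observing the equivalence of the two forms of the $\theta=1$ condition.

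Concretely, the steps I would carry out are: first, record that $C^1[a,b]$ is a Banach subalgebra of $C[a,b]$ (both are Banach algebras under pointwise multiplication, and $\|h\|_{C[a,b]}\le\|h\|_{C^1[a,b]}$, so the inclusion is continuous), which makes the notion of differential subalgebra applicable. Second, start from $\|h_1h_2\|_{C^1[a,b]}=\|(h_1h_2)'\|_{C[a,b]}+\|h_1h_2\|_{C[a,b]}$ and bound the first summand using the product rule and the triangle inequality by $\|h_1'\|_{C[a,b]}\|h_2\|_{C[a,b]}+\|h_1\|_{C[a,b]}\|h_2'\|_{C[a,b]}$. Third, bound $\|h_1h_2\|_{C[a,b]}\le\|h_1\|_{C[a,b]}\|h_2\|_{C[a,b]}$ and regroup the three resulting terms into $\|h_1\|_{C^1[a,b]}\|h_2\|_{C[a,b]}+\|h_1\|_{C[a,b]}\|h_2\|_{C^1[a,b]}$, using $\|h_i'\|_{C[a,b]}+\|h_i\|_{C[a,b]}=\|h_i\|_{C^1[a,b]}$. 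Fourth, invoke the reformulation of \eqref{differentialnorm.def} for $\theta=1$ to conclude that $C^1[a,b]$ is a differential subalgebra of order one in $C[a,b]$ with $D_0=1$.

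There is essentially no obstacle here: the estimate is elementary and has already been written out in \eqref{Cab.eq}. The only point deserving a word of care is the passage between the two equivalent formulations of the $\theta=1$ condition — dividing the symmetric form \eqref{differentialnorm.def} through by $\|A\|_{\mathcal A}^{-1}\|B\|_{\mathcal A}^{-1}$ (or rather multiplying out) to obtain the Leibniz form, and noting this requires $\|A\|_{\mathcal A},\|B\|_{\mathcal A}\ne 0$, with the degenerate case $A=0$ or $B=0$ being trivial. Since the excerpt already states this reformulation, the theorem follows immediately.
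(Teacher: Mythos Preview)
Your proposal is correct and follows exactly the paper's approach: the theorem is deduced directly from the displayed estimate \eqref{Cab.eq}, which is the Leibniz-form of \eqref{differentialnorm.def} with $\theta=1$ and $D_0=1$. There is nothing to add.
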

  Due to the above illustrative example of differential subalgebras of order one,
the  norm $\|\cdot\|_{\mathcal A}$  satisfying \eqref{differentialnorm.def} 
is also used to describe smoothness in abstract Banach algebra  \cite{blackadarcuntz91}.

Let
${\mathcal W}^1$  be  the Banach algebra of all periodic functions such that
both $f$ and its derivative $f'$ belong to  the Wiener algebra  ${\mathcal W}$, and define the norm on ${\mathcal W}^1$ by
\begin{equation}\label{differentialwiener.def}
\|f\|_{{\mathcal W}^1}  =  \|f\|_{\mathcal W}+\|f'\|_{\mathcal W}
  =  \sum_{n\in \ZZ} (|n|+1) |\hat f(n)|
  \end{equation}
  for $f(x)=\sum_{n\in \ZZ} \hat f(n) e^{inx}\in {\mathcal W}^1$.
Following the argument used in the proof of Theorem \ref{C1ab.thm}, 
we have
\begin{thm} \label{wienerdiff.theorem}
    ${\mathcal W}^1$ is a differential subalgebra of order one in ${\mathcal W}$.
    \end{thm}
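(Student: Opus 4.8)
The plan is to mimic the computation in \eqref{Cab.eq} exactly, replacing the sup-norm $\|\cdot\|_{C[a,b]}$ by the Wiener norm $\|\cdot\|_{\mathcal W}$ and the $C^1$-norm by $\|\cdot\|_{{\mathcal W}^1}$. First I would recall that $\mathcal W$ is a Banach algebra with submultiplicative norm, $\|fg\|_{\mathcal W}\le \|f\|_{\mathcal W}\|g\|_{\mathcal W}$, which follows because the Fourier coefficients of $fg$ are the convolution $\widehat{fg}(n)=\sum_k \hat f(k)\hat g(n-k)$ and $\ell^1$ is closed under convolution with $\|\cdot\|_1$ submultiplicative. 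Next I would observe that differentiation acts on Fourier coefficients by $\widehat{f'}(n)=in\hat f(n)$, so that $(fg)'=f'g+fg'$ holds in $\mathcal W$ (term by term, since $|n|\,|\widehat{fg}(n)|\le \sum_k(|k|+|n-k|)|\hat f(k)||\hat g(n-k)|$ is summable whenever $f,g\in\mathcal W^1$), giving the Leibniz rule in this setting.

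Then the main estimate runs as follows. For $f,g\in{\mathcal W}^1$,
\begin{eqnarray*}
\|fg\|_{{\mathcal W}^1} &=& \|fg\|_{\mathcal W}+\|(fg)'\|_{\mathcal W}\\
&\le& \|f\|_{\mathcal W}\|g\|_{\mathcal W}+\|f'g\|_{\mathcal W}+\|fg'\|_{\mathcal W}\\
&\le& \|f\|_{\mathcal W}\|g\|_{\mathcal W}+\|f'\|_{\mathcal W}\|g\|_{\mathcal W}+\|f\|_{\mathcal W}\|g'\|_{\mathcal W}\\
&\le& \|f\|_{{\mathcal W}^1}\|g\|_{\mathcal W}+\|f\|_{\mathcal W}\|g\|_{{\mathcal W}^1},
\end{eqnarray*}
where the first inequality uses the Leibniz rule and the triangle inequality in $\mathcal W$, the second uses submultiplicativity of $\|\cdot\|_{\mathcal W}$, and the last regroups terms using $\|f\|_{\mathcal W}\le\|f\|_{{\mathcal W}^1}$ and $\|g\|_{\mathcal W}\le\|g\|_{{\mathcal W}^1}$. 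This is precisely the $\theta=1$ reformulation of \eqref{differentialnorm.def} displayed just before Theorem \ref{C1ab.thm}, with $D_0=1$, so ${\mathcal W}^1$ is a differential subalgebra of order one in ${\mathcal W}$.

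I do not expect any serious obstacle here; the only mild point of care is justifying that the Leibniz rule is valid in $\mathcal W$ rather than merely formally — i.e. that $f,g\in\mathcal W^1$ forces $(fg)'\in\mathcal W$ with $(fg)'=f'g+fg'$ — and that $f'g, fg'\in\mathcal W$, both of which follow immediately from the submultiplicativity of the Wiener norm once one knows $f',g'\in\mathcal W$. Everything else is the same three-line Leibniz computation as in Theorem \ref{C1ab.thm}, which is why the excerpt simply says ``following the argument used in the proof of Theorem \ref{C1ab.thm}.''
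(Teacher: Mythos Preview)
Your proof is correct and follows exactly the route the paper indicates: the same Leibniz-rule computation as in \eqref{Cab.eq}, with $\|\cdot\|_{C[a,b]}$ replaced by the submultiplicative Wiener norm $\|\cdot\|_{\mathcal W}$ and $\|\cdot\|_{C^1[a,b]}$ by $\|\cdot\|_{{\mathcal W}^1}$. The extra care you take in justifying the Leibniz rule at the level of Fourier coefficients is appropriate and does not depart from the paper's intended argument.
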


Recall from  the classical Wiener's lemma that  ${\mathcal W}$ is an inverse-closed subalgebra of
${\mathcal C}$, the algebra of all periodic continuous functions under multiplication.
This leads to the following natural question:

\begin{ques}\label{question1}
 Is ${\mathcal W}^1$  a differential subalgebra
of ${\mathcal C}$?
\end{ques}

Let $\ell^p, 1\le p\le \infty$, be
 the space of all $p$-summable  sequences on $\ZZ$ with norm denoted by $\|\cdot\|_p$.
  To answer the above question,
we consider  Banach algebras ${\mathcal C}$, ${\mathcal W}$ and ${\mathcal W}^1$  in the ``frequency domain".
Let ${\mathcal B}(\ell^p)$ be the algebra of all bounded linear operators on  $\ell^p, 1\le p\le \infty$,
 and let   
\begin{equation}
\label{tildew.def}
\tilde {\mathcal W}=\Big\{A:=(a(i-j))_{i,j\in \ZZ},\  \| A\|_{\tilde W}=\sum_{k\in \ZZ} |a(k)|<\infty\Big\}\end{equation}
and
\begin{equation}
\label{tildew1.def}
{\tilde {\mathcal W}}^1=\Big\{A:=(a(i-j))_{i,j\in \ZZ}, \ \| A\|_{{\tilde W}^1}=\sum_{k\in \ZZ} |k| |a(k)|<\infty\Big\}\end{equation}
be  Banach algebras of  Laurent matrices
with symbols in ${\mathcal W}$ and ${\mathcal W}^1$ respectively.   Then the classical Wiener's lemma can be reformulated
as that $\tilde {\mathcal W}$ is an inverse-closed subalgebra of ${\mathcal B}(\ell^2)$,
and  an equivalent statement of Theorem \ref{wienerdiff.theorem}
is that ${\tilde {\mathcal W}}^1$ is a differential subalgebra of order one in $\tilde {\mathcal W}$.
Due to the above equivalence,  Question
\ref{question1} in the ``frequency domain" becomes whether  ${\mathcal W}^1$ is a differential subalgebra of order $\theta\in (0, 1]$ in  ${\mathcal C}$.
In  \cite{suncasp05}, the first example of  differential subalgebra of infinite matrices with order $\theta\in (0, 1)$
was discovered.

\begin{thm}\label{W1.thm}
${\mathcal W}^1$ is a differential subalgebra of ${\mathcal C}$ with order $2/3$.
\end{thm}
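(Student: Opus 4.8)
The plan is to work in the frequency domain, i.e., to show that $\tilde{\mathcal W}^1$ is a differential subalgebra of order $2/3$ in $\mathcal B(\ell^2)$, which by the equivalences noted in the text is the same statement. Given Laurent matrices $A = (a(i-j))$ and $B = (b(i-j))$ in $\tilde{\mathcal W}^1$, their product is the Laurent matrix with symbol $c = a * b$ (convolution on $\ZZ$), and the quantity to control is $\|c\|_{\tilde W^1} = \sum_k |k|\,|c(k)| = \sum_k |k|\,\bigl|\sum_j a(j) b(k-j)\bigr|$. The three norms in play are $\|A\|_{\mathcal B(\ell^2)}$, which is the sup norm of the symbol $\sum_n a(n)e^{in\cdot}$, the Wiener norm $\|a\|_1 = \sum_k |a(k)|$ (which dominates the $\mathcal B(\ell^2)$-norm), and the weighted norm $\|a\|_{\tilde W^1} = \sum_k |k|\,|a(k)|$.

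The key estimate is an interpolation-type inequality. Using $|k| \le |j| + |k-j|$, one gets the Leibniz-type bound $\|c\|_{\tilde W^1} \le \|a\|_{\tilde W^1}\|b\|_1 + \|a\|_1\|b\|_{\tilde W^1}$, which only gives order one relative to $\tilde{\mathcal W}$, not relative to $\mathcal B(\ell^2)$. To get order $2/3$ relative to $\mathcal B(\ell^2)$, I would split the convolution sum at a threshold $N$: for the ``low-frequency'' part $|k| \le N$ one bounds $\sum_{|k|\le N}|k|\,|c(k)| \le N \sum_k |c(k)| = N\|c\|_1 \le N\|a\|_1\|b\|_1$; for the ``high-frequency'' part I would instead estimate $\bigl(\sum_{|k|>N} |k|\,|c(k)|\bigr)$ by Cauchy--Schwarz against a power of $|k|$ chosen so that the resulting $\ell^2$-type sum of $c$ is controlled by $\|a\|_2\|b\|_2$ — and here one uses that $\|a\|_2 \le \|a\|_1^{1/2}\|a\|_{\mathcal B(\ell^2)}^{1/2}$-type bounds, or more directly that the $\ell^2$ norm of the symbol is the $\mathcal B(\ell^2)$ operator norm. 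Concretely: write $\sum_{|k|>N}|k|\,|c(k)| \le \bigl(\sum_{|k|>N}|k|^{2s}\bigr)^{1/2}\bigl(\sum_k |c(k)|^2 |k|^{2-2s}\bigr)^{1/2}$ fails for convergence, so instead one should bound $|k|\,|c(k)| \le |k|\cdot |c(k)|$ and use $\sum_{|k|>N}|c(k)| \le N^{-1}\sum_k |k|\,|c(k)| \le N^{-1}(\|a\|_{\tilde W^1}\|b\|_1 + \|a\|_1\|b\|_{\tilde W^1})$ — but that is circular. The workable route is: estimate the tail of $c$ itself in $\ell^1$ by the tails of $a$ and $b$ in $\ell^1$, each of which is $\le N^{-\alpha}\|\cdot\|_{\tilde W^1}$-ish only if one has $\ell^1$-weight $1$, which gives $\alpha$ too small. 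So the real mechanism must be: bound $\sum_{|k|>N}|k|\,|c(k)|$ by splitting the convolution index $j$ according to whether $|j|>N/2$ or $|k-j|>N/2$, giving $\sum_{|k|>N}|k|\,|c(k)| \lesssim \sum_{|j|>N/2}\sum_m |j|\,|a(j)|\,|b(m)| + \text{symmetric} \le 2\sum_{|j|>N/2}|j|\,|a(j)|\cdot\|b\|_1$ — still order one. The genuine improvement comes from applying Cauchy--Schwarz to the tail sum $\sum_{|j|>N/2}|j|\,|a(j)| \le \bigl(\sum_{|j|>N/2}|j|^{-2\epsilon}\bigr)^{1/2}\bigl(\sum_j |j|^{2+2\epsilon}|a(j)|^2\bigr)^{1/2}$; the first factor is $\lesssim N^{1/2-\epsilon}$, and the second factor is an $H^{s}$-type Sobolev norm of the symbol that lies between $\|a\|_2 \sim \|A\|_{\mathcal B(\ell^2)}$ and $\|a\|_{\tilde W^1}$ by a further interpolation. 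Chaining these interpolations and optimizing the two free parameters ($N$ and $\epsilon$) against each other is what produces the exponent $2/3 = 1 - 1/3$; I expect the optimization to balance a term $N\|A\|_{\mathcal B}\|B\|_{\mathcal B}$ against a term $N^{-1/2}\|A\|_{\tilde W^1}^{?}\|A\|_{\mathcal B}^{?}\cdots$ so that the worst case ratio scales like $(\|A\|_{\mathcal B}/\|A\|_{\tilde W^1})^{2/3}$.

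The main obstacle, and the part requiring care, is precisely this optimization: getting the bookkeeping of the several interpolation inequalities right so that the exponents combine to exactly $2/3$ rather than some smaller number, and verifying that $2/3$ is sharp (or at least that the argument does not accidentally prove order one, which is false since by Question~\ref{question1}'s resolution $\mathcal W^1$ is genuinely not Leibniz over $\mathcal C$). I would organize the final write-up as: (i) reduce to the Laurent-matrix / convolution statement; (ii) record the elementary inequalities $\|a\|_2 \le \|a\|_{\mathcal B(\ell^2)}$ (equality in fact), $\|a\|_{\mathcal B(\ell^2)}\le \|a\|_1$, and the weighted Cauchy--Schwarz bound $\sum_{|k|>N}|k|\,|a(k)| \le C N^{1/2}\bigl(\sum_k (|k|^2+1)^{3/2}|a(k)|^2\bigr)^{1/2}$ together with the interpolation $\sum_k (|k|^2+1)^{3/2}|a(k)|^2 \le \|a\|_{\tilde W^1}\,\|a\|_{?}$; (iii) perform the low/high split of the convolution and assemble; (iv) optimize over $N$. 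The one technical nuisance is keeping the symmetry between $A$ and $B$ manifest so that the final bound has the symmetric form demanded by \eqref{differentialnorm.def} with $\theta = 2/3$.
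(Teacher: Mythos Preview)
Your reduction to Laurent matrices and the Leibniz estimate
\[
\|AB\|_{\tilde{\mathcal W}^1}\le C\big(\|A\|_{\tilde{\mathcal W}^1}\|B\|_{\tilde{\mathcal W}}+\|A\|_{\tilde{\mathcal W}}\|B\|_{\tilde{\mathcal W}^1}\big)
\]
are correct and are exactly the first step in the paper's argument. The gap is in what comes next: you try to split $\sum_k|k|\,|c(k)|$ at a threshold $N$, and every variant you write down either becomes circular or collapses back to order one. The reason is that the threshold split should be applied \emph{not} to the product $c=a*b$ but to each factor's \emph{unweighted} $\ell^1$-norm. The missing lemma is the single-sequence interpolation
\[
\|a\|_{1}=\sum_k|a(k)|\ \le\ D\,\|A\|_{{\mathcal B}(\ell^2)}^{2/3}\,\|A\|_{\tilde{\mathcal W}^1}^{1/3},
\]
whose proof is a two-line split: $\sum_{|k|\le N}|a(k)|\le (2N+1)^{1/2}\|a\|_{\ell^2}\le (2N+1)^{1/2}\|A\|_{{\mathcal B}(\ell^2)}$ by Cauchy--Schwarz and the fact that the $\ell^2$-norm of any row of $A$ is bounded by the operator norm (note $\|a\|_{\ell^2}\le\|A\|_{{\mathcal B}(\ell^2)}$, not equality as you wrote), while $\sum_{|k|>N}|a(k)|\le N^{-1}\sum_k|k|\,|a(k)|=N^{-1}\|A\|_{\tilde{\mathcal W}^1}$; optimizing $N\sim(\|A\|_{\tilde{\mathcal W}^1}/\|A\|_{{\mathcal B}(\ell^2)})^{2/3}$ gives the claim. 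Substituting this bound for $\|a\|_1$ and $\|b\|_1$ into the Leibniz estimate immediately yields order $\theta=2/3$, with no second free parameter to tune.

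This is precisely the paper's route: Theorem~\ref{W1.thm} is deduced from Theorem~\ref{sundiff.thm} specialized to $p=1$, $\alpha=1$, and the proof of Theorem~\ref{sundiff.thm} is structured exactly as the triplet argument of Theorem~\ref{triple1.thm} with intermediate algebra ${\mathcal M}={\mathcal A}_{1,0}=\tilde{\mathcal W}$, Leibniz exponent $\theta_0=1$ in \eqref{triple1.thm.eq1}, and the interpolation \eqref{triple1.thm.eq2} with $\theta_1=2/3$ proved via \eqref{para1.eq}--\eqref{para3.eq}. Your low/high split of $c$, the Sobolev detour through $\sum_k|k|^{2+2\epsilon}|a(k)|^2$, and the two-parameter $(N,\epsilon)$ optimization are all unnecessary once the interpolation is applied one factor at a time.
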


To consider differential subalgebras  of infinite matrices in the noncommutative setting, we introduce three noncommutative  Banach algebras of
infinite matrices with certain off-diagonal decay.
Given $1\le p\le \infty$ and $\alpha\ge 0$,
 we define
the Gr\"ochenig-Schur  family of infinite matrices
by
 \begin{equation}\label{GS.def}
{\mathcal A}_{p,\alpha}=\Big\{ A=(a(i,j))_{i,j \in \Z}, \  \|A\|_{{\mathcal A}_{p,\alpha}}<\infty\Big\}
\end{equation}
\cite{gltams06, jaffard90, moteesun, schur11,  suntams07, suncasp05},
the
Baskakov-Gohberg-Sj\"ostrand family of infinite matrices by
\begin{equation}\label{BGS.def}
{\mathcal C}_{p,\alpha}=\Big\{ A= (a(i,j))_{i,j \in \Z}, \  \|A\|_{{\mathcal C}_{p,\alpha}}<\infty\Big\}
\end{equation}
\cite{baskakov90, gkwieot89, gltams06, sjostrand94,suntams07}, and
the Beurling family of infinite matrices
\begin{equation}\label{Beurling.def}
{\mathcal B}_{p,\alpha}=\Big\{ A= (a(i,j))_{i,j \in \Z}, \  \|B\|_{{\mathcal A}_{p,\alpha}}<\infty\Big\}
\end{equation}
\cite{beurling49,  shinsun19, sunca11},
 where     $u_\alpha(i, j)=(1+|i-j|)^\alpha, \alpha\ge 0$, are  polynomial weights on $\Z^2$,
 \begin{equation}\label{GSnorm.def}
 \|A\|_{{\mathcal A}_{p,\alpha}}
 = \max \Big\{ \sup_{i \in \Z} \big\|\big(a(i,j) u_\alpha(i, j)\big)_{j\in \Z}\big\|_p, \ \ \sup _{j \in \Z}
 \big\|\big(a(i,j) u_\alpha(i, j)\big)_{i\in \Z}\big\|_p
 \Big\},
\end{equation}
\begin{equation}\label{BKSnorm.def}
\|A\|_{{\mathcal C}_{p,\alpha}} = \Big\| \Big(\sup_{i-j=k} |a(i,j)| u_\alpha(i, j)\Big)_{k\in \Z} \Big\|_p,
\end{equation}
and
\begin{equation}\label{Beurlingnorm.def}
\|A\|_{{\mathcal B}_{p,\alpha}} = \Big\| \Big(\sup_{|i-j|\ge |k| } |a(i,j)| u_\alpha(i, j)\Big)_{k\in \Z} \Big\|_p.
\end{equation}
Clearly, we have
\begin{equation}\label{properinclusion}
{\mathcal B}_{p,\alpha} \subset {\mathcal C}_{p,\alpha} \subset
{\mathcal A}_{p,\alpha}  \ \ {\rm for \ all}\ 1\le p\le \infty \ {\rm and} \ \alpha\ge 0.
\end{equation}
The above inclusion  is proper for $1\le p<\infty$, while
the  above three  families  
of infinite matrices coincide  for $p=\infty$,
\begin{equation}\label{properinclusioninfinite}
{\mathcal B}_{\infty,\alpha}={\mathcal C}_{\infty,\alpha}=
{\mathcal A}_{\infty,\alpha}  \ \ {\rm for \ all} \ \alpha\ge 0,
\end{equation}
which is also known as the Jaffard family of infinite matrices  \cite{jaffard90},
 \begin{equation}\label{Jaffard.def}
{\mathcal J}_{\alpha}=\Big\{ A= (a(i,j))_{i,j \in \Z}, \  \|A\|_{{\mathcal J}_{\alpha}}=\sup_{i,j\in \Z}|a(i,j)| u_\alpha(i-j)<\infty\Big\}.
\end{equation}

Observe that
$\|A\|_{{\mathcal A}_{p,\alpha}}=\|A\|_{{\mathcal C}_{p,\alpha}}$
for a Laurent matrix  $A=(a(i-j))_{i,j\in \Z}$. Then
Banach algebras $\tilde {\mathcal W}$ and ${\tilde {\mathcal W}}^1$
in \eqref{tildew.def} and \eqref{tildew1.def}
are the commutative subalgebra of the  Gr\"ochenig-Schur  algebra ${\mathcal A}_{1, \alpha}$
and the Baskakov-Gohberg-Sj\"ostrand algebra ${\mathcal C}_{1, \alpha}$ for $\alpha=0, 1$ respectively,
\begin{equation}\label{wa.re}
\tilde {\mathcal W}= {\mathcal A}_{1, 0}\cap {\mathcal L}={\mathcal C}_{1, 0}\cap {\mathcal L}
\end{equation}
and
\begin{equation}\label{wa1.re}
{\tilde {\mathcal W}}^1= {\mathcal A}_{1, 1}\cap {\mathcal L}={\mathcal C}_{1, 1}\cap {\mathcal L},
\end{equation}
where ${\mathcal L}$ is the set of all Laurent matrices  $A=(a(i-j))_{i,j\in \Z}$.
The sets ${\mathcal A}_{p, \alpha}, {\mathcal C}_{p,\alpha}, {\mathcal B}_{p,\alpha}$
with $p=1$ and $\alpha=0$ are  noncommutative Banach algebras under matrix multiplication,
the Baskakov-Gohberg-Sj\"ostrand algebra  ${\mathcal C}_{1,0}$ and the Beurling algebra ${\mathcal B}_{1, 0}$ are inverse-closed subalgebras of ${\mathcal B}(\ell^2)$ \cite{baskakov90, bochnerphillips42, gkwieot89,  sjostrand94, sunca11}, however
the Schur algebra ${\mathcal A}_{1,0}$ is not inverse-closed in ${\mathcal B}(\ell^2)$  \cite{tessera10}.
We remark that the inverse-closedness of the Baskakov-Gohberg-Sj\"ostrand algebra  ${\mathcal C}_{1,0}$
in ${\mathcal B}(\ell^2)$ can be understood as a noncommutative extension of the classical Wiener's lemma for the
commutative subalgebra $\tilde {\mathcal W}$ of Laurent matrices in ${\mathcal B}(\ell^2)$.

For $1\le p\le \infty$ and $\alpha>1-1/p$, one may verify that
the Gr\"ochenig-Schur family ${\mathcal A}_{p,\alpha}$,
the Baskakov-Gohberg-Sj\"ostrand family  ${\mathcal C}_{p,\alpha}$ and the Beurling family ${\mathcal B}_{p, \alpha}$
of infinite matrices form Banach algebras  under  matrix multiplication
and they are inverse-closed subalgebras of ${\mathcal B}(\ell^2)$ \cite{gltams06, jaffard90, sunca11, suntams07, suncasp05}.
In \cite{sunca11, suntams07, suncasp05},  their differentiability in ${\mathcal B}(\ell^2)$ is established.

\begin{thm} \label{sundiff.thm}
Let $1\le p\le \infty$ and $\alpha>1-1/p$. Then
 ${\mathcal A}_{p,\alpha}$,
  ${\mathcal C}_{p,\alpha}$ and  ${\mathcal B}_{p, \alpha}$ are differential subalgebras of  order
  $\theta_0= (\alpha+1/p-1)/(\alpha+1/p-1/2)\in (0, 1)$ in ${\mathcal B}(\ell^2)$.
\end{thm}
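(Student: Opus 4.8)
The plan is to establish, for each of the three algebras ${\mathcal X}$ among ${\mathcal A}_{p,\alpha}$, ${\mathcal C}_{p,\alpha}$, ${\mathcal B}_{p,\alpha}$, the Leibniz-type estimate
\[
\|AB\|_{\mathcal X}\le D_0\Big(\|A\|_{\mathcal X}^{1-\theta_0}\|A\|_{{\mathcal B}(\ell^2)}^{\theta_0}\|B\|_{\mathcal X}+\|A\|_{\mathcal X}\|B\|_{\mathcal X}^{1-\theta_0}\|B\|_{{\mathcal B}(\ell^2)}^{\theta_0}\Big),
\]
which is exactly \eqref{differentialnorm.def} with ${\mathcal B}={\mathcal B}(\ell^2)$ and $\theta=\theta_0$. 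Write $\mu:=\alpha+1/p-1>0$, so $\theta_0=\mu/(\mu+1/2)$ and $1-\theta_0=(1/2)/(\mu+1/2)$. I would deduce the displayed estimate from two ingredients: \textbf{(I)} a product inequality $\|AB\|_{\mathcal X}\le D_1\big(\|A\|_{\mathcal X}\|B\|_{\mathcal Y}+\|A\|_{\mathcal Y}\|B\|_{\mathcal X}\big)$, where ${\mathcal Y}$ is the auxiliary unweighted ``$\ell^1$'' algebra (${\mathcal A}_{1,0}$, ${\mathcal C}_{1,0}$, ${\mathcal B}_{1,0}$, respectively); and \textbf{(II)} the interpolation inequality $\|C\|_{\mathcal Y}\le D_2\|C\|_{\mathcal X}^{1-\theta_0}\|C\|_{{\mathcal B}(\ell^2)}^{\theta_0}$ for all $C\in{\mathcal X}$. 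Substituting (II), applied once to $A$ and once to $B$, into (I) gives the estimate. I describe the two ingredients in detail for the transparent case ${\mathcal X}={\mathcal A}_{p,\alpha}$.

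\emph{Ingredient (I) for ${\mathcal A}_{p,\alpha}$.} I would use the subadditivity $u_\alpha(i,j)\le 2^\alpha\big(u_\alpha(i,k)+u_\alpha(k,j)\big)$ for all $i,j,k\in\Z$, a consequence of $1+|i-j|\le 2\max(1+|i-k|,\,1+|k-j|)$. Inserting it into $u_\alpha(i,j)|(AB)(i,j)|\le\sum_k u_\alpha(i,j)|a(i,k)||b(k,j)|$ bounds the weighted matrix of $AB$ entrywise by $2^\alpha\big(\widetilde A\,|B|+|A|\,\widetilde B\big)$, where $\widetilde A=\big(|a(i,k)|u_\alpha(i,k)\big)_{i,k}$, $\widetilde B=\big(|b(k,j)|u_\alpha(k,j)\big)_{k,j}$, and $|A|,|B|$ are the matrices of absolute values. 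The $\ell^p$-norm of the $i$th row of $\widetilde A\,|B|$ is at most $\big\|(|a(i,k)|u_\alpha(i,k))_k\big\|_p\,\big\|\,|B|\,\big\|_{\ell^p\to\ell^p}\le\|A\|_{{\mathcal A}_{p,\alpha}}\|B\|_{{\mathcal A}_{1,0}}$ by the Schur/Riesz--Thorin bound $\|M\|_{\ell^p\to\ell^p}\le\|M\|_{{\mathcal A}_{1,0}}$ (here $\|B\|_{{\mathcal A}_{1,0}}<\infty$ since $\alpha>1-1/p$), while the $\ell^p$-norm of its $j$th column is at most $\sum_k\big\|(|a(i,k)|u_\alpha(i,k))_i\big\|_p|b(k,j)|\le\|A\|_{{\mathcal A}_{p,\alpha}}\|B\|_{{\mathcal A}_{1,0}}$ by Minkowski's inequality. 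Treating $|A|\,\widetilde B$ symmetrically yields $\|AB\|_{{\mathcal A}_{p,\alpha}}\le 2^\alpha\big(\|A\|_{{\mathcal A}_{p,\alpha}}\|B\|_{{\mathcal A}_{1,0}}+\|A\|_{{\mathcal A}_{1,0}}\|B\|_{{\mathcal A}_{p,\alpha}}\big)$.

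\emph{Ingredient (II) for ${\mathcal A}_{p,\alpha}$.} Fix $C\in{\mathcal A}_{p,\alpha}$ and an integer $N\ge1$, and split each row as $\sum_j|c(i,j)|=\sum_{|i-j|\le N}|c(i,j)|+\sum_{|i-j|>N}|c(i,j)|$. The near part is $\le(2N+1)^{1/2}\big(\sum_{|i-j|\le N}|c(i,j)|^2\big)^{1/2}\le(2N+1)^{1/2}\|C^*e_i\|_{\ell^2}\le(2N+1)^{1/2}\|C\|_{{\mathcal B}(\ell^2)}$. By H\"older's inequality the far part is $\le\big\|(|c(i,j)|u_\alpha(i,j))_j\big\|_p\big(\sum_{m>N}2(1+m)^{-\alpha p'}\big)^{1/p'}\le D N^{-\mu}\|C\|_{{\mathcal A}_{p,\alpha}}$, the series converging exactly because $\alpha p'>1$, i.e. $\alpha>1-1/p$, and $1/p'-\alpha=-\mu$. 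The same bounds hold for columns, so $\|C\|_{{\mathcal A}_{1,0}}\le D\big(N^{1/2}\|C\|_{{\mathcal B}(\ell^2)}+N^{-\mu}\|C\|_{{\mathcal A}_{p,\alpha}}\big)$ for every $N\ge1$. Since $\|C\|_{{\mathcal B}(\ell^2)}\le\|C\|_{{\mathcal A}_{1,0}}\le D\|C\|_{{\mathcal A}_{p,\alpha}}$, taking $N$ of order $\big(\|C\|_{{\mathcal A}_{p,\alpha}}/\|C\|_{{\mathcal B}(\ell^2)}\big)^{1/(\mu+1/2)}\ge1$ balances the two terms and gives $\|C\|_{{\mathcal A}_{1,0}}\le D_2\|C\|_{{\mathcal A}_{p,\alpha}}^{1-\theta_0}\|C\|_{{\mathcal B}(\ell^2)}^{\theta_0}$ with precisely $\theta_0=\mu/(\mu+1/2)=(\alpha+1/p-1)/(\alpha+1/p-1/2)$. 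Combined with (I), this settles ${\mathcal A}_{p,\alpha}$.

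\emph{The algebras ${\mathcal C}_{p,\alpha}$, ${\mathcal B}_{p,\alpha}$, and the main obstacle.} Here one runs the same two-step scheme with ${\mathcal A}_{1,0}$ replaced by ${\mathcal C}_{1,0}$, resp. ${\mathcal B}_{1,0}$. Ingredient (I) now rests on the pointwise domination $d_{AB}(k)\le(d_A*d_B)(k)$, where $d_A(k)=\sup_{i-j=k}|a(i,j)|$ (and its Beurling analogue for ${\mathcal B}_{p,\alpha}$), on $u_\alpha(k)\le2^\alpha\big(u_\alpha(k_1)+u_\alpha(k_2)\big)$ for $k_1+k_2=k$, and on Young's inequality $\|f*g\|_p\le\|f\|_p\|g\|_1$; this yields $\|AB\|_{{\mathcal C}_{p,\alpha}}\le2^\alpha\big(\|A\|_{{\mathcal C}_{p,\alpha}}\|B\|_{{\mathcal C}_{1,0}}+\|A\|_{{\mathcal C}_{1,0}}\|B\|_{{\mathcal C}_{p,\alpha}}\big)$, and the far part of (II) is unchanged. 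I expect the genuine difficulty to be the near part of (II) for these two algebras: because $d_C(k)$ is a supremum along the $k$th diagonal rather than an $\ell^2$-section of $C$, the naive Cauchy--Schwarz bound gives only $\sum_{|k|\le N}d_C(k)\le(2N+1)\|C\|_{{\mathcal B}(\ell^2)}$, which would degrade the exponent to $\mu/(\mu+1)$. Recovering the $N^{1/2}$ growth, hence the stated $\theta_0$, forces a sharper near-diagonal analysis --- for instance working at the endpoint $p=2$, where $d_{AB}(k)\le\|AB\|_{{\mathcal B}(\ell^2)}$ uniformly so that summing over the at most $2N+1$ near-diagonals costs only $(2N+1)^{1/2}$, and combining this with the convolution estimate and a splitting of $\sum_m a(i,m)b(m,j)$ according to whether $|i-m|$ or $|m-j|$ exceeds $N$; alternatively, exploiting the identification of ${\mathcal C}_{p,\alpha}$ with a weighted modulation space and the twisted-convolution relations available there. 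This sharp near-diagonal estimate for ${\mathcal C}_{p,\alpha}$ and ${\mathcal B}_{p,\alpha}$ is where I expect the real work to lie.
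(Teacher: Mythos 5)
Your treatment of ${\mathcal A}_{p,\alpha}$ is essentially the paper's proof: the paper establishes exactly your Ingredient (I) as \eqref{sundiff.thm.pf.eq1} via the subadditivity $u_\alpha(i,j)\le 2^\alpha(u_\alpha(i,k)+u_\alpha(k,j))$, and your Ingredient (II) as \eqref{para3.eq} via the same near/far splitting with Cauchy--Schwarz and H\"older and the same balancing choice of truncation radius ($\tau_0$ in \eqref{tau0.def}); like you, the paper writes this out only for $1<p<\infty$ and defers ${\mathcal C}_{p,\alpha}$ and ${\mathcal B}_{p,\alpha}$ entirely to \cite{sunca11, suntams07, suncasp05}. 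Your instinct that the two-step scheme breaks for those two families is correct, and the obstruction is even harder than a lossy estimate: the intermediate inequality $\|C\|_{{\mathcal C}_{1,0}}\le D\,\|C\|_{{\mathcal C}_{p,\alpha}}^{1-\theta}\|C\|_{{\mathcal B}(\ell^2)}^{\theta}$ is \emph{false} for any $\theta>\mu/(\mu+1)$, as one sees by taking $C$ to have a single unit entry on each of $2N+1$ consecutive diagonals with no two entries sharing a row or column, so that $\|C\|_{{\mathcal B}(\ell^2)}=1$, $\|C\|_{{\mathcal C}_{1,0}}=2N+1$, and $\|C\|_{{\mathcal C}_{p,\alpha}}\asymp N^{\alpha+1/p}$ (the same matrix defeats the Beurling version). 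So no sharpened near-diagonal estimate for a \emph{single} matrix can rescue the scheme at the exponent $\theta_0=\mu/(\mu+1/2)$; the cited proofs instead split the product sum $\sum_m a(i,m)b(m,j)$ directly and apply Cauchy--Schwarz to rows and columns of $A$ and $B$ before passing to the diagonal suprema, which is in the spirit of the first alternative you sketch. For what the paper actually proves, your proposal matches it.
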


\begin{proof} The following argument about differential subalgebra property for the Gr\"ochenig-Schur algebra ${\mathcal A}_{p, \alpha}, 1<p<\infty$, is adapted from \cite{suncasp05}.  The reader may refer to \cite{sunca11, suntams07, suncasp05} for the detailed proof to the
differential subalgebra property for the Baskakov-Gohberg-Sj\"ostrand algebra
  ${\mathcal C}_{p,\alpha}$ and  the Beurling algebra ${\mathcal B}_{p, \alpha}$.
 Take $A=(a(i,j))_{i,j\in \Z}$ and $B=(b(i,j))_{i,j\in \Z}\in {\mathcal A}_{p, \alpha}$, and write
 $C=AB=(c(i,j))_{i,j\in \Z}$. Then
 \begin{eqnarray}\label{sundiff.thm.pf.eq1}
 \|C\|_{{\mathcal A}_{p, \alpha}} & = &
 \max \Big\{ \sup_{i \in \Z} \big\|\big(c(i,j) u_\alpha(i, j)\big)_{j\in \Z}\big\|_p, \ \ \sup _{j \in \Z}
 \big\|\big(c(i, j) u_\alpha(i, j)\big)_{i\in \Z}\big\|_p\Big\}\nonumber\\
 &\le & 2^\alpha \max \Big\{ \sup_{i \in \Z} \Big\|\Big(\sum_{k\in \Z} |a(i,k)| |b(k,j)| \big(u_\alpha(i, k)+u_\alpha(k, j)\big) \Big)_{j\in \Z}\Big\|_p, \nonumber\\
  & & \qquad \qquad \ \  \sup_{j \in \Z} \Big\|\Big(\sum_{k\in \Z} |a(i,k)| |b(k,j)| \big(u_\alpha(i, k)+u_\alpha(k, j)\big) \Big)_{i\in \Z}\Big\|_p
 \Big\}\nonumber\\
 & \le & 2^\alpha \|A\|_{{\mathcal A}_{p, \alpha}} \|B\|_{{\mathcal A}_{1, 0}}+ 2^\alpha \|A\|_{{\mathcal A}_{1, 0}} \|B\|_{{\mathcal A}_{p, \alpha}},
 \end{eqnarray}
 where the first inequality follows from the inequality
 $$u_\alpha(i,j)\le  2^\alpha \big(u_\alpha(i, k)+ u_\alpha(k, j)), \ i, j, k\in \ZZ.$$

Let $1/p'=1-1/p$,  and define
   \begin {equation}
   \label{tau0.def}
   \tau_0= \left\lfloor \left(\Bigg(\frac{\alpha p'+1}{\alpha p'-1}\Bigg)^{1/p'}\frac{\|A\|_{{\mathcal A}_{p, \alpha}}}{\|A\|_{{\mathcal B}(\ell^2)}}\right)^{1/(\alpha+1/2-1/p')}\right\rfloor,\end{equation}
   where $\lfloor t\rfloor$ denotes the integer part of a real number $t$.
      Then for $i\in \Z$, we have
 \begin{eqnarray}\label{para1.eq}
 \sum_{j\in \Z} |a(i,j)| & = & \Big(\sum_{|j-i|\le \tau_0}+\sum_{|j-i|>\tau_0}\Big) |a(i,j)|\nonumber\\
  & \le &  \Big(\sum_{ |j-i|\le \tau_0} |a(i,j)|^2\Big)^{1/2} \Big(\sum_{|j-i|\le \tau_0} 1\Big)^{1/2}\nonumber\\
 &  & + \Big(\sum_{|j-i|\ge \tau_0+1} |a(i,j)|^p (u_\alpha(i, j))^p\Big)^{1/p} \Big(\sum_{|j-i|\ge \tau_0+1} (u_\alpha(i, j))^{-p'} \Big)^{1/p'}\nonumber\\
 & \le &  \|A\|_{{\mathcal B}(\ell^2)} (2\tau_0+1)^{1/2}+  2^{1/p'} (\alpha p'-1)^{-1/p'}  \|A\|_{{\mathcal A}_{p, \alpha}}
 (\tau_0+1)^{-\alpha+1/p'}\nonumber\\
 & \le &  D
 \|A\|_{{\mathcal A}_{p, \alpha}}^{1-\theta_0}  \|A\|_{{\mathcal B}(\ell^2)}^{\theta_0},
   \end{eqnarray}
   where $D$ is an absolute constant depending on $p, \alpha$ only, and
   the last inequality follows from  \eqref{tau0.def} and the following estimate
     $$   \|A\|_{{\mathcal B}(\ell^2)}\le \|A\|_{{\mathcal A}_{1, 0}}\le \Big(\sum_{k\in \Z} (|k|+1)^{-\alpha p'}\Big)^{1/p'}
   \|A\|_{{\mathcal A}_{p, \alpha}}\le \Big(\frac{\alpha p'+1}{\alpha p'-1}\Big)^{1/p'}\|A\|_{{\mathcal A}_{p, \alpha}}.$$
   Similarly we can prove that 
   \begin{equation}
   \label{para2.eq}
  \sup_{j\in \Z} \sum_{i\in \Z} |a(i,j)| \le D
 \|A\|_{{\mathcal A}_{p, \alpha}}^{1-\theta_0}  \|A\|_{{\mathcal B}(\ell^2)}^{\theta_0}.\end{equation}
 Combining \eqref{para1.eq} and \eqref{para2.eq} leads to
   \begin{equation}
   \label{para3.eq}
  \|A\|_{{\mathcal A}_{1, 0}} \le D
 \|A\|_{{\mathcal A}_{p, \alpha}}^{1-\theta_0}  \|A\|_{{\mathcal B}(\ell^2)}^{\theta_0}.\end{equation}
Replacing the matrix $A$ in \eqref{para3.eq} by the matrix $B$ gives
   \begin{equation}
   \label{para4.eq}
  \|B\|_{{\mathcal A}_{1, 0}} \le D
 \|B\|_{{\mathcal A}_{p, \alpha}}^{1-\theta_0}  \|B\|_{{\mathcal B}(\ell^2)}^{\theta_0}.\end{equation}
Therefore it follows from \eqref{sundiff.thm.pf.eq1}, \eqref{para3.eq} and \eqref{para4.eq} that
\begin{equation}
\|C\|_{{\mathcal A}_{p, \alpha}}\le  2^\alpha  D
 \|A\|_{{\mathcal A}_{p, \alpha}}\|B\|_{{\mathcal A}_{p, \alpha}}^{1-\theta_0}  \|B\|_{{\mathcal B}(\ell^2)}^{\theta_0}
+2^\alpha  D
 \|B\|_{{\mathcal A}_{p, \alpha}}\|A\|_{{\mathcal A}_{p, \alpha}}^{1-\theta_0}  \|A\|_{{\mathcal B}(\ell^2)}^{\theta_0},
\end{equation}
which proves the differential subalgebra property for Banach algebras ${\mathcal A}_{p, \alpha}$ with $1<p<\infty$ and $\alpha>1-1/p$.
\end{proof}

The argument used in the proof of Theorem \ref{sundiff.thm} involves a triplet of  Banach algebras
${\mathcal A}_{p, \alpha}$, ${\mathcal A}_{1, 0}$ and ${\mathcal B}^2$ satisfying \eqref{sundiff.thm.pf.eq1} and
\eqref{para3.eq}.
In the following theorem, we extend the above observation to
 general Banach algebra triplets  $({\mathcal A}, {\mathcal M}, {\mathcal B})$.

\begin{thm}\label{triple1.thm}
 Let ${\mathcal A}, {\mathcal M}$  and
${\mathcal B}$ be Banach algebras such that ${\mathcal A}$ is a Banach subalgebra of ${\mathcal M}$
and ${\mathcal M}$ is a  Banach subalgebra of ${\mathcal B}$.
If there exist positive exponents $\theta_0, \theta_1\in (0, 1]$ and  absolute constants $D_0, D_1$ such that
\begin{equation}\label{triple1.thm.eq1}
\|AB\|_{\mathcal A}\le D_0\|A\|_{\mathcal A} \|B\|_{\mathcal A} \Big (\Big(\frac{\|A\|_{\mathcal M}}{\|A\|_{\mathcal A}}\Big)^{\theta_0} +
\Big(\frac{\|B\|_{\mathcal M}}{\|B\|_{\mathcal A}}\Big)^{\theta_0}
\Big)
\quad {\rm for \ all} \  A, B \in {\mathcal A},
\end{equation}
 and
\begin{equation}\label{triple1.thm.eq2}
\|A\|_{\mathcal M}\le D_1 \|A\|_{\mathcal A}^{1-\theta_1} \|A\|_{\mathcal B}^{\theta_1} \ \ {\rm for \ all} \   \ A\in {\mathcal A},
\end{equation}
then
${\mathcal A}$ is a differential subalgebra of order  $\theta_0\theta_1$ in ${\mathcal B}$.
\end{thm}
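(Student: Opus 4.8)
The plan is to combine the two hypotheses by substituting the interpolation inequality \eqref{triple1.thm.eq2} directly into the differential-subalgebra inequality \eqref{triple1.thm.eq1}, so that the intermediate norm $\|\cdot\|_{\mathcal M}$ is eliminated in favour of $\|\cdot\|_{\mathcal A}$ and $\|\cdot\|_{\mathcal B}$. First I would take arbitrary $A, B\in {\mathcal A}$ and apply \eqref{triple1.thm.eq1}, obtaining
\[
\|AB\|_{\mathcal A}\le D_0\|A\|_{\mathcal A}\|B\|_{\mathcal A}\Big(\Big(\frac{\|A\|_{\mathcal M}}{\|A\|_{\mathcal A}}\Big)^{\theta_0}+\Big(\frac{\|B\|_{\mathcal M}}{\|B\|_{\mathcal A}}\Big)^{\theta_0}\Big).
\]
Then for the first term I would estimate, using \eqref{triple1.thm.eq2},
\[
\Big(\frac{\|A\|_{\mathcal M}}{\|A\|_{\mathcal A}}\Big)^{\theta_0}\le \Big(\frac{D_1\|A\|_{\mathcal A}^{1-\theta_1}\|A\|_{\mathcal B}^{\theta_1}}{\|A\|_{\mathcal A}}\Big)^{\theta_0}=D_1^{\theta_0}\Big(\frac{\|A\|_{\mathcal B}}{\|A\|_{\mathcal A}}\Big)^{\theta_0\theta_1},
\]
and symmetrically for the term involving $B$. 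Substituting these two bounds back gives
\[
\|AB\|_{\mathcal A}\le D_0 D_1^{\theta_0}\|A\|_{\mathcal A}\|B\|_{\mathcal A}\Big(\Big(\frac{\|A\|_{\mathcal B}}{\|A\|_{\mathcal A}}\Big)^{\theta_0\theta_1}+\Big(\frac{\|B\|_{\mathcal B}}{\|B\|_{\mathcal A}}\Big)^{\theta_0\theta_1}\Big),
\]
which is precisely \eqref{differentialnorm.def} with order $\theta=\theta_0\theta_1$ and constant $D_0({\mathcal A},{\mathcal B},\theta_0\theta_1)=D_0 D_1^{\theta_0}$.

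Two small points need to be checked so the argument is clean. One is the degenerate case where $\|A\|_{\mathcal A}=0$ (hence $A=0$), in which both sides of the target inequality vanish and there is nothing to prove; so I would tacitly assume $\|A\|_{\mathcal A},\|B\|_{\mathcal A}>0$, which also makes the ratios above well defined. The other is monotonicity of $t\mapsto t^{\theta_0}$ on $[0,\infty)$, which is what legitimises passing the exponent $\theta_0$ through the inequality $\|A\|_{\mathcal M}\le D_1\|A\|_{\mathcal A}^{1-\theta_1}\|A\|_{\mathcal B}^{\theta_1}$; since $\theta_0\in(0,1]$ this is immediate. Also worth noting is that $\theta_0\theta_1\in(0,1]$, so the resulting order is a legitimate differential-subalgebra order.

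Honestly, there is no serious obstacle here: the theorem is a formal consequence obtained by chaining the two inequalities, and the only ``work'' is bookkeeping of exponents and constants. If anything, the one thing to be careful about is not to accidentally produce cross terms — one might worry that $\|A\|_{\mathcal M}$ should be bounded using a mix of $\|A\|_{\mathcal A}$ and $\|A\|_{\mathcal B}$ in a way that interacts with the $\|A\|_{\mathcal A}$ already present — but because \eqref{triple1.thm.eq2} is homogeneous of degree one (the exponents $1-\theta_1$ and $\theta_1$ sum to one), the factor $\|A\|_{\mathcal A}^{1-\theta_1}$ exactly cancels against the $\|A\|_{\mathcal A}$ in the denominator up to the power $\theta_0$, leaving only the clean ratio $(\|A\|_{\mathcal B}/\|A\|_{\mathcal A})^{\theta_0\theta_1}$. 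This is the structural reason the computation closes, and it is the point I would emphasise.
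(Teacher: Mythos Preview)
Your proof is correct and follows essentially the same approach as the paper: substitute \eqref{triple1.thm.eq2} into \eqref{triple1.thm.eq1}, simplify the ratio using $\|A\|_{\mathcal A}^{1-\theta_1}/\|A\|_{\mathcal A}=\|A\|_{\mathcal A}^{-\theta_1}$, and read off the constant $D_0 D_1^{\theta_0}$ and order $\theta_0\theta_1$. Your additional remarks on the degenerate case $A=0$ and on the monotonicity of $t\mapsto t^{\theta_0}$ are fine points the paper leaves implicit.
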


\begin{proof}  For any $A, B\in {\mathcal A}$, we obtain from \eqref{triple1.thm.eq1} and
\eqref{triple1.thm.eq2} that
\begin{eqnarray*}
\|AB\|_{\mathcal A} & \le &  D_0\|A\|_{\mathcal A} \|B\|_{\mathcal A} \Bigg (\Big(\frac{D_1 \|A\|_{\mathcal A}^{1-\theta_1} \|A\|_{\mathcal B}^{\theta_1}}{\|A\|_{\mathcal A}}\Big)^{\theta_0} +
\Big(\frac{D_1 \|B\|_{\mathcal A}^{1-\theta_1} \|B\|_{\mathcal B}^{\theta_1}}{\|B\|_{\mathcal A}}\Big)^{\theta_0}
\Bigg)\\
  & \le &  D_0  D_1^{\theta_0}\|A\|_{\mathcal A} \|B\|_{\mathcal A} \Big (\Big(\frac{\|A\|_{\mathcal B}}{\|A\|_{\mathcal A}}\Big)^{\theta_0\theta_1} +
\Big(\frac{\|B\|_{\mathcal B}}{\|B\|_{\mathcal A}}\Big)^{\theta_0\theta_1}
\Big),
\end{eqnarray*}
which completes the proof.
\end{proof}

Following the argument used in \eqref{Cab.eq}, we can show that $C^2[a, b]$ is a differential subalgebra of $C^1[a, b]$.
For any  distinct $x, y\in [a, b]$ and $f\in C^2[a, b]$,  observe that
$$
|f'(x)|= \frac{|f(y)-f(x)-f''(\xi) (y-x)^2/2 |}{|y-x|} \le 2\|f\|_{C[a, b]} |y-x|^{-1}+ \frac{1}{2} \|f^{\prime\prime}\|_{C[a, b]}
|y-x|
$$
for some $\xi\in [a, b]$, which implies that
\begin{equation}
\|f'\|_{C[a, b]}\le \max\big (4 \|f\|_{C[a, b]}^{1/2} \|f^{\prime\prime} \|_{C[a, b]}^{1/2},  8 (b-a)^{-1} \|f\|_{C[a, b]}\big).
\end{equation}
Therefore there exists  a positive constant $D$ such that
\begin{equation}
\|f\|_{C^1[a,b]}\le D \|f\|_{C^2[a, b]}^{1/2} \|f\|_{C[a, b]}^{1/2} \ \ {\rm for \ all} \   \ f\in  C^2[a, b].
\end{equation}
As an application of Theorem  \ref{triple1.thm}, we conclude that
$C^2[a, b]$ is a differential subalgebra of  order $1/2$ in $C[a, b]$.

\smallskip

We finish the section with the proof of Theorem \ref{W1.thm}.

\begin{proof}[Proof of Theorem \ref{W1.thm}]
The conclusion follows  from \eqref{wa1.re} and Theorem \ref{sundiff.thm} with $p=1$ and $\alpha=1$.
\end{proof}

\section{Generalized differential subalgebras}\label{generalizedDS.section}

By \eqref{differentialnorm.def}, a  differential subalgebra ${\mathcal A}$ satisfies the Brandenburg's  requirement:
\begin{equation}\label{bt.req}
\|A^2\|_{\mathcal A}\le 2D_0 \|A\|_{\mathcal A}^{2-\theta} \|A\|_{\mathcal B}^{\theta}, \ A\in {\mathcal A}.
\end{equation}
To consider  the norm-controlled
inversion of a Banach subalgebra ${\mathcal A}$ of ${\mathcal B}$,
the above requirement \eqref{bt.req} could be relaxed to the existence of an integer $m\ge 2$ such that
the $m$-th power of elements in ${\mathcal A}$ satisfies
\begin{equation}\label{weakpower}
\|A^m\|_{\mathcal A} \le D \|A\|_{\mathcal A}^{m-\theta} \|A\|_{\mathcal B}^{\theta}, \ \  A\in {\mathcal A},
\end{equation}
where  $\theta\in (0, m-1]$ and $D=D({\mathcal A}, {\mathcal B}, m, \theta)$ is an absolute positive constant, see   Theorem \ref{main-thm1} in the next section.
For $h\in C^1[a, b]$ and $m\ge 2$, we have
  \begin{equation*}
\|h^m \|_{C^1[a,b]}  =  m \|h^{m-1} h'\|_{C[a, b]}+ \|h^m\|_{C[a, b]}
\le m \|h\|_{C^1[a, b]} \|h\|_{C[a, b]}^{m-1},
\end{equation*}
and hence  the differential subalgebra $C^1[a, b]$ of $C[a, b]$ satisfies
\eqref{weakpower} with $\theta=m-1$.
In this section, we introduce some sufficient conditions 
so that
\eqref{weakpower} holds for some integer $m\ge 2$.

\begin{thm}\label{triplenew.thm}   Let ${\mathcal A}, {\mathcal M}$  and
${\mathcal B}$ be Banach algebras such that ${\mathcal A}$ is a Banach subalgebra of ${\mathcal M}$
and ${\mathcal M}$ is a Banach subalgebra of ${\mathcal B}$.
If there exist an integer $k\ge 2$, positive exponents $\theta_0, \theta_1$, and  absolute constants $E_0, E_1$
such that
\begin{equation}\label{triplenew.eq1}
\|A_1A_2\cdots A_k\|_{\mathcal A}  \le   E_0
\Big(\prod_{i=1}^k\|A_i\|_{{\mathcal A}} \Big) \sum_{j=1}^k \Big(\frac{\|A_i\|_{\mathcal M}}{\|A_i\|_{\mathcal A}}\Big)^{\theta_0}, \ \  A_1, \ldots, A_k \in {\mathcal A}
\end{equation}
and
\begin{equation}\label{triplenew.eq2}
\|A^2\|_{\mathcal M}\le E_1  \|A\|_{\mathcal A}^{2-\theta_1} \|A\|_{\mathcal B}^{\theta_1},\  A\in {\mathcal A},
\end{equation}
then \eqref{weakpower} holds for $m=2k$ and  $\theta=\theta_0\theta_1$.
\end{thm}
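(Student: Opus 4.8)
The plan is to obtain \eqref{weakpower} for $m=2k$ by specializing the $k$-linear estimate \eqref{triplenew.eq1} to $A_1=A_2=\cdots=A_k=A^2$. The choice of $A^2$ (rather than $A$) is forced by the shape of hypothesis \eqref{triplenew.eq2}, which controls $\|A^2\|_{\mathcal M}$, not $\|A\|_{\mathcal M}$; and since there are $k$ factors, the product is $A^{2k}$, which is why the target exponent is $m=2k$. With this substitution the left-hand side of \eqref{triplenew.eq1} is exactly $\|A^{2k}\|_{\mathcal A}$, and the sum on the right collapses to $k$ identical terms, so \eqref{triplenew.eq1} reads
\begin{equation*}
\|A^{2k}\|_{\mathcal A}\le kE_0\,\|A^2\|_{\mathcal A}^{\,k-\theta_0}\,\|A^2\|_{\mathcal M}^{\,\theta_0},\qquad A\in{\mathcal A}.
\end{equation*}
This reduces the theorem to estimating the two factors $\|A^2\|_{\mathcal A}$ and $\|A^2\|_{\mathcal M}$ in terms of $\|A\|_{\mathcal A}$ and $\|A\|_{\mathcal B}$.

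For the first factor I would simply use submultiplicativity of the norm on the Banach algebra ${\mathcal A}$, namely \eqref{banachalgebra.def}, to get $\|A^2\|_{\mathcal A}\le K\|A\|_{\mathcal A}^2$, where $K\ge 1$ may be taken to be the structure constant of ${\mathcal A}$ (replacing it by $\max\{K,1\}$ if necessary); since the exponent $k-\theta_0$ is nonnegative (which holds automatically in the regime $\theta_0\le 1$ of all the examples, and more generally is part of the standing hypotheses), this bound survives being raised to the power $k-\theta_0$. For the second factor I would invoke \eqref{triplenew.eq2} verbatim, $\|A^2\|_{\mathcal M}\le E_1\|A\|_{\mathcal A}^{2-\theta_1}\|A\|_{\mathcal B}^{\theta_1}$. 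Substituting both into the displayed inequality and collecting exponents,
\begin{equation*}
\|A^{2k}\|_{\mathcal A}\le kE_0K^{\,k-\theta_0}E_1^{\,\theta_0}\,\|A\|_{\mathcal A}^{\,2(k-\theta_0)+(2-\theta_1)\theta_0}\,\|A\|_{\mathcal B}^{\,\theta_0\theta_1},
\end{equation*}
and the exponent of $\|A\|_{\mathcal A}$ simplifies to $2(k-\theta_0)+2\theta_0-\theta_0\theta_1=2k-\theta_0\theta_1$. Hence \eqref{weakpower} holds with $m=2k$, $\theta=\theta_0\theta_1$, and $D=kE_0K^{\,k-\theta_0}E_1^{\,\theta_0}$.

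I do not expect a genuine obstacle: once \eqref{triplenew.eq1} is granted, the argument is pure exponent bookkeeping, and the real content of the section is rather in \emph{producing} a $k$-fold Leibniz-type estimate of the form \eqref{triplenew.eq1} in concrete cases. The two points worth a sentence of care are (i) that the Banach-algebra constant $K$ of ${\mathcal A}$ should be normalized to be $\ge 1$ so that $K^{\,k-\theta_0}$ is harmless and the substitution above is monotone in the right direction, and (ii) that one should record the mild side condition $\theta_0\theta_1\le 2k-1$ (immediate when $\theta_0,\theta_1\le 1$, since $k\ge 2$) so that the resulting $\theta=\theta_0\theta_1$ falls in the admissible range $(0,m-1]$ required by \eqref{weakpower}. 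Both are automatic from the hypotheses in force, so the proof is short.
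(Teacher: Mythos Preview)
Your proof is correct and essentially identical to the paper's: both specialize \eqref{triplenew.eq1} with $A_1=\cdots=A_k=A^2$, then bound $\|A^2\|_{\mathcal A}$ by \eqref{banachalgebra.def} and $\|A^2\|_{\mathcal M}$ by \eqref{triplenew.eq2}, arriving at the same constant $D=kE_0K^{\,k-\theta_0}E_1^{\,\theta_0}$.
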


\begin{proof}   By \eqref{banachalgebra.def},
\eqref{triplenew.eq1} and \eqref{triplenew.eq2}, we have
\begin{equation}
\|A^{2k}\|_{\mathcal A}  \le     k E_0
\|A^2\|_{{\mathcal A}}^{k-\theta_0} \|A^2\|_{\mathcal M}^{\theta_0}
\le k E_0 E_1^{\theta_0}  K^{k-\theta_0} \|A\|_{{\mathcal A}}^{2k-\theta_0\theta_1} \|A\|_{\mathcal B}^{\theta_0\theta_1}, \ \  A\in {\mathcal A},
\end{equation}
which completes the proof. 
\end{proof}

For a Banach algebra triplet $({\mathcal A},  {\mathcal M}, {\mathcal B})$  in Theorem \ref{triple1.thm}, we obtain from
\eqref{triple1.thm.eq1} and \eqref{triple1.thm.eq2} that
\begin{eqnarray}\label{triple1.eq3}
\|A_1A_2\cdots A_k\|_{\mathcal A} & \le &  D_0
\|A_1\|_{{\mathcal A}} \|A_2\cdots A_k\|_{{\mathcal A}}
\Bigg (\Big(\frac{\|A_1\|_{\mathcal M}}{\|A_1\|_{\mathcal A}}\Big)^{\theta_0} +
\Big(\frac{\|A_2\cdots A_k\|_{\mathcal M}}{\|A_2\cdots A_k\|_{\mathcal A}}\Big)^{\theta_0}
\Bigg)\nonumber\\
& \le & \tilde D_0
\Big(\prod_{i=1}^k\|A_i\|_{{\mathcal A}} \Big)
\sum_{j=1}^k \Big(\frac{\|A_j\|_{\mathcal M}}{\|A_j\|_{\mathcal A}}\Big)^{\theta_0}, \ \ A_1, \ldots, A_k\in {\mathcal A},
\end{eqnarray}
and
\begin{equation}\label{triple1.eq4}
\|A^2\|_{\mathcal M}\le \tilde K \|A\|_{\mathcal M}^2\le
D_1^2 \tilde K \| A\|_{\mathcal A}^{2-2\theta_1}\|A\|_{\mathcal B}^{2\theta_1}, \ \ A\in {\mathcal A},
\end{equation}
where $\tilde D_0$ is an absolute constant and $\tilde K$ is the constant $K$ in
\eqref{banachalgebra.def} for the Banach algebra ${\mathcal M}$.
Therefore the assumptions  \eqref{triplenew.eq1} and \eqref{triplenew.eq2}
in Theorem \ref{triplenew.thm} are satisfied
for the Banach algebra triplet $({\mathcal A},  {\mathcal M}, {\mathcal B})$  in Theorem \ref{triple1.thm}.

For a differential subalgebra  ${\mathcal A}$ of order $\theta_0$ in ${\mathcal B}$,
we observe  that the requirements
\eqref{triplenew.eq1} and \eqref{triplenew.eq2} with  ${\mathcal M}={\mathcal B}$,  $k=2$ and $\theta_1=2$ are met,
 and hence
 \eqref{weakpower} holds for $m=4$ and  $\theta=2\theta_0$.
Recall that ${\mathcal B}$ is a trivial differential subalgebra of  ${\mathcal B}$.
In the following corollary,  we can extend the above conclusion to arbitrary  differential subalgebras ${\mathcal M}$ of ${\mathcal B}$.

\begin{cor}
Let ${\mathcal A}, {\mathcal M}$  and
${\mathcal B}$ be Banach algebras such that ${\mathcal A}$ is a differential subalgebra of order $\theta_0$ in  ${\mathcal M}$
and ${\mathcal M}$ is a differential subalgebra of  order $\theta_1$ in ${\mathcal B}$.
Then \eqref{weakpower} holds for $m=4$ and  $\theta=\theta_0\theta_1$.
\end{cor}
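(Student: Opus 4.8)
The plan is to obtain the corollary as a direct application of Theorem~\ref{triplenew.thm} with $k=2$, since then $2k=4$ and the exponent it produces is exactly $\theta_0\theta_1$. Thus the whole task reduces to verifying that hypotheses \eqref{triplenew.eq1} and \eqref{triplenew.eq2} hold for the triplet $(\mathcal{A},\mathcal{M},\mathcal{B})$ with $k=2$ and the exponents $\theta_0,\theta_1$.

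First I would check \eqref{triplenew.eq1}. Because $\mathcal{A}$ is a differential subalgebra of order $\theta_0$ in $\mathcal{M}$, the defining inequality \eqref{differentialnorm.def} applied to the pair $\mathcal{A}\subset\mathcal{M}$ reads, for all $A_1,A_2\in\mathcal{A}$, that $\|A_1A_2\|_{\mathcal{A}}\le D_0\|A_1\|_{\mathcal{A}}\|A_2\|_{\mathcal{A}}\big((\|A_1\|_{\mathcal{M}}/\|A_1\|_{\mathcal{A}})^{\theta_0}+(\|A_2\|_{\mathcal{M}}/\|A_2\|_{\mathcal{A}})^{\theta_0}\big)$, which is precisely \eqref{triplenew.eq1} with $k=2$ and $E_0=D_0(\mathcal{A},\mathcal{M},\theta_0)$.

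Next I would check \eqref{triplenew.eq2}. Since $\mathcal{M}$ is a differential subalgebra of order $\theta_1$ in $\mathcal{B}$, the Brandenburg-type consequence \eqref{bt.req} supplies a constant $D_0'=D_0(\mathcal{M},\mathcal{B},\theta_1)$ with $\|A^2\|_{\mathcal{M}}\le 2D_0'\|A\|_{\mathcal{M}}^{2-\theta_1}\|A\|_{\mathcal{B}}^{\theta_1}$ for every $A\in\mathcal{M}$, in particular for every $A\in\mathcal{A}$. As $\mathcal{A}$ is a Banach subalgebra of $\mathcal{M}$, the inclusion map is bounded, say $\|A\|_{\mathcal{M}}\le C\|A\|_{\mathcal{A}}$ for $A\in\mathcal{A}$; and since $\theta_1\in(0,1]$ we have $2-\theta_1\ge 1>0$, so $\|A\|_{\mathcal{M}}^{2-\theta_1}\le C^{2-\theta_1}\|A\|_{\mathcal{A}}^{2-\theta_1}$. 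Substituting yields $\|A^2\|_{\mathcal{M}}\le 2D_0'C^{2-\theta_1}\|A\|_{\mathcal{A}}^{2-\theta_1}\|A\|_{\mathcal{B}}^{\theta_1}$, which is \eqref{triplenew.eq2} with $E_1=2D_0'C^{2-\theta_1}$ and exponent $\theta_1$.

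With both hypotheses verified, Theorem~\ref{triplenew.thm} immediately delivers \eqref{weakpower} for $m=4$ and $\theta=\theta_0\theta_1$; note that $\theta_0\theta_1\le 1\le m-1=3$, so the admissibility constraint $\theta\in(0,m-1]$ built into \eqref{weakpower} is met. There is no substantive obstacle here: the argument is essentially bookkeeping, and the only point needing a moment's attention is the passage from an $\mathcal{M}$-norm bound to an $\mathcal{A}$-norm bound in the verification of \eqref{triplenew.eq2}, which relies only on the boundedness of the embedding $\mathcal{A}\hookrightarrow\mathcal{M}$ together with $2-\theta_1>0$.
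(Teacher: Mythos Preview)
Your proposal is correct and matches the paper's intended route: the corollary is stated immediately after Theorem~\ref{triplenew.thm}, and the paragraph preceding it explicitly points out that one should verify \eqref{triplenew.eq1} and \eqref{triplenew.eq2} for the triplet (there with ${\mathcal M}={\mathcal B}$, $k=2$), so applying Theorem~\ref{triplenew.thm} with $k=2$ is exactly what the authors have in mind. Your verification of \eqref{triplenew.eq2} via the Brandenburg inequality \eqref{bt.req} for ${\mathcal M}\subset{\mathcal B}$ together with the continuous embedding ${\mathcal A}\hookrightarrow{\mathcal M}$ is the natural argument and introduces no gap.
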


Following the argument used in the proof of Theorem \ref{triplenew.thm}, we can show that
\eqref{weakpower} holds for $m=4$ if the requirement \eqref{triplenew.eq1} with $k=3$ is replaced by the following strong version
\begin{equation}\label{triplenew.eq3}
\|ABC\|_{\mathcal A}  \le   E_0
\|A\|_{{\mathcal A}}  \|C\|_{{\mathcal A}}  \|B\|_{{\mathcal A}}^{1-\theta_0}
\|B\|_{\mathcal M}^{\theta_0}, \ \  A, B, C \in {\mathcal A}.
\end{equation}

\begin{thm}\label{triplenew.thm2}
Let ${\mathcal A}, {\mathcal M}$  and
${\mathcal B}$ be Banach algebras such that ${\mathcal A}$ is a Banach subalgebra of ${\mathcal M}$
and ${\mathcal M}$ is a Banach subalgebra of ${\mathcal B}$.
If there exist positive exponents $\theta_0, \theta_1\in (0, 1]$ and  absolute constants $E_0, E_1$
such that \eqref{triplenew.eq2} and \eqref{triplenew.eq3} hold,
then \eqref{weakpower} holds for $m=4$ and  $\theta=\theta_0\theta_1$.
\end{thm}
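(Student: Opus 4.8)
The plan is to mimic the computation in the proof of Theorem \ref{triplenew.thm}, but with the single application of \eqref{triplenew.eq3} playing the role that the two-step estimate \eqref{triplenew.eq1}--\eqref{triplenew.eq2} played there. The key observation is that $A^4$ can be written as the triple product $A\cdot A^2\cdot A$, so that \eqref{triplenew.eq3} applies directly with the outer factors being $A$ and the middle factor being $A^2$.

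First I would apply \eqref{triplenew.eq3} with $A, C$ replaced by $A$ and $B$ replaced by $A^2$, giving
\begin{equation*}
\|A^4\|_{\mathcal A}=\|A\cdot A^2\cdot A\|_{\mathcal A}\le E_0\,\|A\|_{\mathcal A}^2\,\|A^2\|_{\mathcal A}^{1-\theta_0}\,\|A^2\|_{\mathcal M}^{\theta_0}.
\end{equation*}
Next I would bound the two remaining factors involving $A^2$: the submultiplicativity \eqref{banachalgebra.def} in ${\mathcal A}$ gives $\|A^2\|_{\mathcal A}\le K\|A\|_{\mathcal A}^2$ (and here $1-\theta_0\ge 0$ since $\theta_0\in(0,1]$, so raising to this power is legitimate and monotone), while \eqref{triplenew.eq2} gives $\|A^2\|_{\mathcal M}\le E_1\|A\|_{\mathcal A}^{2-\theta_1}\|A\|_{\mathcal B}^{\theta_1}$.

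Substituting both bounds yields
\begin{equation*}
\|A^4\|_{\mathcal A}\le E_0\,K^{1-\theta_0}\,E_1^{\theta_0}\,\|A\|_{\mathcal A}^{\,2+2(1-\theta_0)+\theta_0(2-\theta_1)}\,\|A\|_{\mathcal B}^{\,\theta_0\theta_1},
\end{equation*}
and the only arithmetic to check is that the exponent of $\|A\|_{\mathcal A}$ simplifies: $2+2(1-\theta_0)+\theta_0(2-\theta_1)=4-\theta_0\theta_1$. This establishes \eqref{weakpower} for $m=4$ and $\theta=\theta_0\theta_1$ with $D=E_0K^{1-\theta_0}E_1^{\theta_0}$. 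There is essentially no obstacle here; the entire content is the algebraic identity $A^4=A\cdot A^2\cdot A$ together with bookkeeping of exponents, and the mild point worth stating explicitly is that $\theta_0\le 1$ is what makes the power $\|A^2\|_{\mathcal A}^{1-\theta_0}$ meaningful and the substitution monotone.
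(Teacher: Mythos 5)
Your proof is correct and is exactly the argument the paper intends: the paper does not write out a proof of this theorem but states that it follows the proof of Theorem \ref{triplenew.thm} with \eqref{triplenew.eq3} in place of \eqref{triplenew.eq1}, which amounts precisely to your decomposition $A^4=A\cdot A^2\cdot A$, the bound $\|A^2\|_{\mathcal A}\le K\|A\|_{\mathcal A}^2$, and \eqref{triplenew.eq2}. The exponent bookkeeping $2+2(1-\theta_0)+\theta_0(2-\theta_1)=4-\theta_0\theta_1$ checks out, and $\theta_0\theta_1\le 1\le m-1$ as required by \eqref{weakpower}.
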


Let $L^p:=L^p(\R), 1\le p\le \infty$, be the space of all $p$-integrable functions on $\R$  with standard norm $\|\cdot\|_p$,
and ${\mathcal B}(L^p)$ be the algebra of bounded linear operators on
$L^p$ with the norm $\|\cdot \|_{{\mathcal B}(L^p)}$.
For $1\le p\le \infty, \alpha\ge 0$ and $\gamma\in [0, 1)$, we define the norm
of a kernel  $K$ on $\R\times \R$ by
\begin{equation}\label{Ex3-def-norm}
\|K\|_{{\mathcal W}_{p,\alpha}^\gamma}=
\left\{
\begin{array}{ll}
 \max\Big(\sup_{x\in \R} \big\|K(x,\cdot)u_\alpha(x,\cdot)\big\|_p,\
\sup_{y\in \R} \big\|K(\cdot,y)u_\alpha(\cdot,y)\big\|_p\Big)  & {\rm  if }\ \gamma =0
\\
\|K\|_{{\mathcal W}_{p,\alpha}^0}+\sup_{0<\delta\le 1} \delta^{-\gamma}
\|\omega_\delta(K)\|_{{\mathcal W}_{p,\alpha}^0} & {\rm if } \ 0 < \gamma <1,
\end{array}
\right.
\end{equation}
where  the modulus of continuity
of  the  kernel  $K$ is defined by
\begin{equation}\label{Ex3-def-mod}
\omega_\delta(K)(x,y):=\sup_{|x^\prime|\le \delta, |y^\prime|\le \delta}
|K(x+x^\prime, y+y^\prime)-K(x,y)|, \ x, y\in \RR,
\end{equation}
and  $u_\alpha(x, y)= (1+|x-y|)^\alpha,  x, y\in \R$ are polynomial weights on $\R\times \R$.
Consider the set  ${\mathcal W}_{p,\alpha}^\gamma$ of integral operators
\begin{equation}\label{Ex3-def-int-oper}
Tf(x)=\int_{{\R}} K_T(x,y) f(y) dy, \quad f \in L^p,
\end{equation}
whose integral kernels $K_T$ satisfy $\|K_T\|_{{\mathcal W}^\gamma_{p, \alpha}}<\infty$, and define
$$
\|T\|_{{\mathcal W}_{p,\alpha}^\gamma}:=
\|K_T \|_{{\mathcal W}_{p,\alpha}^\gamma}, \ T\in {\mathcal W}_{p,\alpha}^\gamma.
$$
Integral operators in ${\mathcal W}_{p, \alpha}^\gamma$ have their kernels being H\"older continuous of order $\gamma$
 and having off-diagonal polynomial decay of order $\alpha$.
For $1\le p\le \infty$ and $\alpha>1-1/p$, one may verify that
 ${\mathcal W}_{p, \alpha}^\gamma, 0\le \gamma<1$, are Banach subalgebras of
  ${\mathcal B}(L^2)$ under operator composition.
  The Banach algebras  ${\mathcal W}_{p, \alpha}^\gamma, 0<\gamma<1$, of  integral operators
  may not form a differential subalgebra of ${\mathcal B}(L^2)$, however  the triple
$({\mathcal W}_{p, \alpha}^\gamma,  {\mathcal W}_{p, \alpha}^0, {\mathcal B}(L^2))$ is proved in
 \cite{sunacha08} to satisfy the following
\begin{equation}\label{Ex3-norm-comp}
\|T_0\|_{\mathcal B} \le  D \|T_0\|_{{\mathcal W}_{p,\alpha}^0} \le
D\|T_0\|_{{\mathcal W}_{p,\alpha}^\gamma},
\end{equation}
\begin{equation}\label{Ex3-norm-2-theta}
\|T_0^2 \|_{{\mathcal W}_{p,\alpha}^0} \le D
\|T_0 \|_{{\mathcal W}_{p,\alpha}^\gamma}^{1+\theta}  \|T_0 \|_{{\mathcal B}(L^2)}^{1-\theta}
\end{equation}
and
\begin{equation}\label{Ex3-norm-3-product}
\|T_1 T_2 T_3  \|_{{\mathcal W}_{p,\alpha}^\gamma} \le D
\|T_1 \|_{{\mathcal W}_{p,\alpha}^\gamma} \| T_2 \|_{{\mathcal W}_{p,\alpha}^0}
\| T_3 \|_{{\mathcal W}_{p,\alpha}^\gamma}
\end{equation}
holds for all $T_i\in {\mathcal W}_{p, \alpha}^\gamma, 0\le i\le 3$, where $D$ is an absolute constant and
$$\theta= \frac{\alpha+\gamma+1/p}{(1+\gamma)(\alpha+1/p)}.$$
Then  the requirements  \eqref{triplenew.eq2} and \eqref{triplenew.eq3} in Theorem \ref{triplenew.thm2}
are met for  the triplet $({\mathcal W}_{p, \alpha}^\gamma,  {\mathcal W}_{p, \alpha}^0, {\mathcal B}(L^2))$,
and hence
the Banach space pair $({\mathcal W}_{p, \alpha}^\gamma,   {\mathcal B}(L^2))$ satisfies the Brandenburg's  condition \eqref{weakpower} with $m=4$
\cite{fangshinsun13, sunacha08}.

\section{Brandenburg  trick and norm-controlled inversion}\label{normcontrolledinversion.section}

Let
$\mathcal{A}$  and $\mathcal{B}$ are $*$-algebras with common identity and involution, and  let
 $\mathcal{B}$ be  symmetric. In this section, we show that
 ${\mathcal A}$ has norm-controlled inversion in ${\mathcal B}$ if it meets
 the Brandenburg requirement \eqref{weakpower}.

\begin{thm}\label{main-thm1}
Let  ${\mathcal B}$  be a   symmetric $*$-algebra with its norm
$\|\cdot\|_{\mathcal B}$
being normalized in the sense that \eqref{banachalgebra.def} holds with $K=1$,
 \begin{equation}\label{banachalgebra.defnew2}
\|\tilde A\tilde B\|_{\mathcal B}\le \|\tilde A\|_{\mathcal B}\|\tilde B\|_{\mathcal B},\   \tilde  A, \tilde B\in {\mathcal B},
\end{equation}
and
$\mathcal{A}$   be  a $*$-algebra with its
norm
$\|\cdot\|_{\mathcal A}$
being normalized too,
\begin{equation}\label{banachalgebra.defnew1}
\|AB\|_{\mathcal A}\le \|A\|_{\mathcal A}\|B\|_{\mathcal A}, \  A, B\in {\mathcal A}.
\end{equation}
If  ${\mathcal A}$ is a $*$-subalgebra of ${\mathcal B}$  with  common  identity  $I$ and involution  $*$,  and
the  pair $({\mathcal A}, {\mathcal B})$ satisfies
the Brandenburg requirement \eqref{weakpower}, then
${\mathcal A}$ has norm-controlled inversion in ${\mathcal B}$. Moreover,
for any  $A\in{\mathcal A}$  being  invertible in ${\mathcal B}$ we have
\begin{eqnarray}\label{norm-control1}
\|A^{-1}\|_{\mathcal A}
& \le &
\|A^* A\|_{\mathcal B}^{-1}  \|A^*\|_{\mathcal A}\nonumber\\
& & \times
\left\{\begin{array}{ll}
   \big(2t_0+(1-  2^{\log_m (1-\theta/m)})^{-1} (\ln a)^{-1}\big)  a
 \exp\Big(\frac{\ln m-\ln (m-\theta)} {\ln (m-\theta)} t_0 \ln a\Big)
  & {\rm if} \ \theta<m-1\\
a^2 (\ln a)^{-1}  (Db)^{m-1} \Gamma\Big(\frac{(m-1) \ln (Db)}{\ln m \ln a}+1\Big)
&   {\rm if} \ \theta=m-1,
\end{array}\right.
\end{eqnarray}
where $\Gamma(s)=\int_0^\infty t^{s-1} e^{-t} dt$ is the Gamma function, $m\ge 2$ and $0<\theta\le m-1$ are the constants  in  \eqref{weakpower},
$\kappa(A^*A)= \|A^*A\|_{\mathcal B} \|(A^*A)^{-1} \|_{\mathcal B}$,
$ a= \big(1- (\kappa(A^*A))^{-1}\big)^{-1}> 1$,
$$b= \frac{ \|I\|_{\mathcal A}+   \|A^* A\|_{\mathcal B}^{-1}
 \|A^* A\|_{\mathcal A}} {1- (\kappa(A^*A))^{-1} }\ge a >1,
$$
and
\begin{equation}\label{t0.def0}
t_0=\Big( \frac{ (m-1)(m-\theta) \log_m (m-\theta) \ln (Db)}{(m-1-\theta) \ln a}\Big)^{\ln m/(\ln m-\ln (m-\theta))} \ {\rm for}\  0<\theta<m-1.
\end{equation}

\end{thm}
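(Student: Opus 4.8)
The plan is to run Brandenburg's trick on the positive element $P = A^*A$, which is invertible in the symmetric $*$-algebra ${\mathcal B}$, and then recover $A^{-1} = P^{-1}A^*$ with the estimate $\|A^{-1}\|_{\mathcal A} \le \|P^{-1}\|_{\mathcal A}\|A^*\|_{\mathcal A}$. Since ${\mathcal B}$ is symmetric, $\sigma_{\mathcal B}(P) \subset (0,\infty)$, and after normalizing we may write $P^{-1}$ through the Neumann-type series built on $Q := I - \|P\|_{\mathcal B}^{-1}P$, whose ${\mathcal B}$-spectral radius is $\rho_{\mathcal B}(Q) = 1 - \|P\|_{\mathcal B}^{-1}\,(\min\sigma_{\mathcal B}(P)) = 1 - (\kappa(P))^{-1} < 1$; this is exactly the quantity $1/a$ appearing in the statement. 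Thus $P^{-1} = \|P\|_{\mathcal B}^{-1}\sum_{k\ge 0} Q^k$ converges in ${\mathcal B}$, and the whole problem reduces to controlling $\sum_{k\ge 0}\|Q^k\|_{\mathcal A}$.

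The core step is to iterate the Brandenburg requirement \eqref{weakpower} along powers of $m$. Writing $n_j = m^j$, repeated application of $\|A^m\|_{\mathcal A}\le D\|A\|_{\mathcal A}^{m-\theta}\|A\|_{\mathcal B}^{\theta}$ to $Q$ gives a recursion for $x_j := \log\|Q^{n_j}\|_{\mathcal A}$ of the form $x_{j+1} \le \log D + (m-\theta)x_j + \theta\, n_j\log\rho$, where $\rho = \rho_{\mathcal B}(Q) = 1/a < 1$ (here one uses $\|Q^{n_j}\|_{\mathcal B}\le \|Q\|_{\mathcal B}^{n_j}$ only after a spectral-radius reduction, or more carefully $\|Q^{n_{j}}\|_{\mathcal B} \le C\rho^{n_j}$ via the spectral radius formula in the symmetric algebra; the clean way is to first pass to an equivalent renormalization so that $\|Q\|_{\mathcal B}\le \rho + \varepsilon$, or simply absorb the geometric decay). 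Solving this linear recursion separates into the two regimes: when $\theta < m-1$ the homogeneous growth rate $(m-\theta)^j$ dominates the geometric decay $\rho^{m^j}$ only up to a threshold index, and one optimizes over where to cut the dyadic-in-$m$ sum — the optimal cutoff is precisely $t_0$ in \eqref{t0.def0}, obtained by balancing $(m-\theta)^j\log D$ against $m^j\log(1/\rho)$; when $\theta = m-1$ the recursion gives $x_j \le$ (geometric decay) $+\ (m-1)^j\log(Db)$ with $(m-1)^j$ now only polynomially related to $m^j$, and summing $\sum_j (Db)^{(m-1)^j}\rho^{m^j}$ produces a Gamma-function bound via the standard estimate $\sum_j r^{j}e^{-c m^j}\asymp \Gamma(\text{something})$ after the substitution matching exponents — this is where the argument of $\Gamma$ in \eqref{norm-control1} comes from.

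To pass from the subsequence $\{n_j\}$ back to all powers $\|Q^k\|_{\mathcal A}$, I would write an arbitrary $k$ in ``base $m$'' and use submultiplicativity \eqref{banachalgebra.defnew1} to bound $\|Q^k\|_{\mathcal A}\le \prod_j \|Q^{n_j}\|_{\mathcal A}^{d_j}$ where $k = \sum d_j m^j$ with digits $d_j\in\{0,\dots,m-1\}$; summing the resulting bound over $k$ and using the geometric factor $a$ (the radius of convergence input) gives the prefactors $2t_0$, $a$, and the $(1 - 2^{\log_m(1-\theta/m)})^{-1}(\ln a)^{-1}$ term — these are geometric-series tails in the cut-off decomposition. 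Finally, the constant $b$ enters because $\|Q\|_{\mathcal A}$ itself must be estimated: $Q = I - \|P\|_{\mathcal B}^{-1}P$, so $\|Q\|_{\mathcal A}\le \|I\|_{\mathcal A} + \|P\|_{\mathcal B}^{-1}\|P\|_{\mathcal A}$, and dividing by $1-(\kappa(P))^{-1}$ (to make the series' base term interact correctly with the decay) yields exactly the displayed $b$; one checks $b\ge a$ from $\|P\|_{\mathcal A}\ge \|P\|_{\mathcal B}\ge \min\sigma_{\mathcal B}(P)$ and $\|I\|_{\mathcal A}\ge 1$.

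\textbf{Main obstacle.} The routine parts (setting up $P=A^*A$, the Neumann series, submultiplicativity) are straightforward; the delicate point is the optimization of the cutoff index in the $\theta<m-1$ case and, more so, extracting the precise Gamma-function constant in the $\theta = m-1$ case. There the iterated inequality produces a sum of the shape $\sum_{j\ge 0}(Db)^{(m-1)^j}a^{-m^j}$, and to recognize this as $\asymp \Gamma\!\big(\tfrac{(m-1)\ln(Db)}{\ln m\,\ln a}+1\big)$ up to the stated algebraic prefactors requires comparing it carefully to an integral $\int_0^\infty (Db)^{(m-1)^t}a^{-m^t}\,dt$ and substituting $s = m^t\ln a$; getting the constants to match \eqref{norm-control1} exactly, rather than just up to absolute constants, is the technical heart of the proof and the step I would allot the most care to. Everything else is bookkeeping around the Brandenburg iteration.
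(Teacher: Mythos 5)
Your strategy is essentially the paper's: reduce to $P=A^*A$ via symmetry, form $B:=I-\|P\|_{\mathcal B}^{-1}P$ with $\|B\|_{\mathcal B}\le 1-\kappa(P)^{-1}=a^{-1}$ and $\|B\|_{\mathcal A}\le ba^{-1}$, expand $P^{-1}$ as a Neumann series, control $\|B^n\|_{\mathcal A}$ through a base-$m$ iteration of \eqref{weakpower}, and finish with an integral comparison (optimized cutoff $t_0$ when $\theta<m-1$, Gamma function when $\theta=m-1$). One organizational difference: you first bound the pure powers $\|B^{m^j}\|_{\mathcal A}$ and then multiply digit contributions via $\|B^k\|_{\mathcal A}\le\prod_j\|B^{m^j}\|_{\mathcal A}^{d_j}$, whereas the paper peels one base-$m$ digit per step, applying \eqref{weakpower} to $B^{n_k}$ with $n_k=\sum_{i\ge k}\varepsilon_i m^{i-k}$. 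Your version yields the same functional form $(Db)^{Cn^{\log_m(m-\theta)}}a^{-n}$ but with a $D$-exponent $\sum_j d_j\sum_{i<j}(m-\theta)^i$ that is larger than the paper's $\sum_{k=0}^{l-1}(m-\theta)^k$ by a bounded factor, so you would land on \eqref{norm-control1} only up to a worse absolute constant, not with the displayed constants.

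The concrete gap is in your $\theta=m-1$ endgame. When $\theta=m-1$ one has $m-\theta=1$, so the iteration gives $\|B^n\|_{\mathcal A}\le D^{\,l}\, b^{\sum_k\varepsilon_k}a^{-n}\le (Db)^{(m-1)\log_m(mn+1)}a^{-n}=(mn+1)^{(m-1)\log_m(Db)}a^{-n}$, i.e.\ a \emph{polynomial in $n$ times a geometric factor}, and the Gamma function arises from $\sum_{n\ge 0}(mn+1)^{s}a^{-n}\le a^2(\ln a)^{-1}m^{s}\Gamma\bigl(\tfrac{s}{\ln m\cdot\,\text{(suitable normalization)}}+1\bigr)$-type estimates, i.e.\ from $\int_0^\infty(t+1)^{(m-1)\log_m(Db)}e^{-(t+1)\ln a}\,dt$. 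Your written sum $\sum_j(Db)^{(m-1)^j}a^{-m^j}$ is not the right object on two counts: the summation must run over \emph{all} powers $n$ (the Neumann series needs every $\|B^n\|_{\mathcal A}$, not just $n=m^j$), and the exponent of $Db$ is $(m-1)\log_m(mn+1)$, logarithmic in $n$, not $(m-1)^{\log_m n}$. As written (e.g.\ for $m=2$, $\theta=1$, where $(m-1)^j\equiv 1$) your sum collapses to a doubly-geometric series and the substitution $s=m^t\ln a$ does not produce $\Gamma\bigl(\tfrac{(m-1)\ln(Db)}{\ln m\,\ln a}+1\bigr)$. The fix is exactly the bookkeeping above: carry the digit sum $\sum_k\varepsilon_k(m-\theta)^k$ through the iteration for general $n$, specialize to $\theta=m-1$, and compare $\sum_n$ with the integral in $n$. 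Your $\theta<m-1$ description has the same over-restriction to indices $j$ rather than all $n$, but there the bound $(Db)^{\frac{(m-1)(m-\theta)}{m-1-\theta}n^{\log_m(m-\theta)}}a^{-n}$ and the split of $\int_0^\infty e^{-s(t)\ln a}\,dt$ at $2t_0$ go through as you indicate.
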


\begin{proof} 
Obviously it suffices to prove \eqref{norm-control1}.
  In this paper, we follow the argument in \cite{shinsun19} to give a sketch proof.
Let $A\in {\mathcal A}$ so that $A^{-1} \in {\mathcal B}$.
As $\mathcal{B}$ is a symmetric $*$-algebra,  the spectrum of $A^*A$ in ${\mathcal B}$ lies in an interval on the positive real axis,
\begin{equation} \sigma (A^* A)\subset
\big[  \|(A^* A)^{-1}\|_{\mathcal B}^{-1}, \   \|A^* A\|_{\mathcal B}\big].
\end{equation}
Therefore
 $B:=I- \|A^* A\|_{\mathcal B}^{-1} A^* A\in {\mathcal A}$ satisfies
\begin{equation}\label{B-norm}
 \|B\|_{\mathcal B} \le 1- (\kappa(A^*A))^{-1}=a^{-1}<1
\end{equation}
and
\begin{equation}\label{A-norm}
\|B\|_{\mathcal A} \le \|I\|_{\mathcal A}+ \|A^* A\|_{\mathcal B}^{-1}
 \|A^* A\|_{\mathcal A}=ba^{-1}.
\end{equation}

For a positive integer $n=\sum_{i=0}^l \varepsilon_i m^i$, define  $n_0=n$ and
$n_k, 1\le k\le l$, inductively by
\begin{equation}\label{nk.def}
n_{k}= \frac{n_{k-1}-\varepsilon_{k-1}}{m}=\sum_{i=k}^l \varepsilon_i m^{i-k}, 1\le k\le l,  \end{equation}
 where $\varepsilon_i\in\{0,1, \ldots, m-1\}, 1\le i\le l-1$ and $\varepsilon_l\in \{1, \ldots, m-1\}$.
By \eqref{weakpower} and \eqref{banachalgebra.defnew2}, we have
\begin{equation}\label{2-exponents}
\|B^{m n_k} \|_{\mathcal A} \le D \| B^{n_k}\|_{\mathcal A}^{m-\theta} \| B^{n_k}\|_{\mathcal B}^{\theta}
\le D \|B^{n_k}\|_{\mathcal A}^{m-\theta} \|B\|_{\mathcal B}^{n_k\theta},
\quad k=1,\cdots, l-1.
\end{equation}

By   \eqref{banachalgebra.defnew2}, \eqref{banachalgebra.defnew1}, \eqref{B-norm}, \eqref{A-norm}, \eqref{nk.def} and
\eqref{2-exponents}, we obtain 
\begin{eqnarray}  \label{tilde-Bn-compu}
  \|B^n\|_{\mathcal A}  & = &  \|B^{n_0}\|_{\mathcal A}
\le \|B^{m n_1}\|_{\mathcal A}
\|B\|_{\mathcal A}^{\varepsilon_0}\le  D \|B^{ n_1}\|_{\mathcal A}^{m-\theta}
\|B\|_{\mathcal A}^{\varepsilon_0}  \|B\|_{\mathcal B}^{n_1\theta}
\nonumber \\
&
\le &  D^{1+ (m-\theta)}
\|B^{n_2}\|_{\mathcal A}^{(m-\theta)^2}
\| B\|_{\mathcal A}^{\varepsilon_0+\varepsilon_1(m-\theta)}
\| B\|_{\mathcal B}^{ n_1 \theta+ n_2 \theta (m-\theta)}\nonumber\\
&  \le &  \cdots
 \nonumber \\
&
\le & D^{\sum_{k=0}^{l-1} (m-\theta)^k}
\|B\|_{\mathcal A}^{\sum_{k=0}^l \varepsilon_k (m-\theta)^k}
\| B\|_{\mathcal B}^{\theta \sum_{k=1}^l n_k (m-\theta)^{k-1}}
\nonumber \\
&
=& D^{\sum_{k=0}^{l-1} (m-\theta)^k}
\|B\|_{\mathcal A}^{\sum_{k=0}^l \varepsilon_k (m-\theta)^k}
\| B\|_{\mathcal B}^{n-\sum_{k=0}^l \varepsilon_k (m-\theta)^k}
\nonumber \\
\qquad &\le & D^{\sum_{k=0}^{l-1} (m-\theta)^k}
b^{\sum_{k=0}^l \varepsilon_k (m-\theta)^k}
a^{-n}
\nonumber \\
\qquad &
\le  & 
\left\{\begin{array}{ll} (Db)^{\frac{(m-1)(m-\theta)}{m-1-\theta} n^{\log_m(m-\theta)}}  a^{-n}  & {\rm if} \ \theta<m-1\\
 &  \\
(Db)^{(m-1)\log_m (mn+1)}  a^{-n}  & {\rm if} \ \theta=m-1,
\end{array}\right.
\end{eqnarray}
where the last inequality holds since
\begin{eqnarray*}
\sum_{k=0}^l \varepsilon_k (m-\theta)^k
& \le &  (m-1) \sum_{k=0}^l (m-\theta)^k \le (m-1) \left\{\begin{array}{ll}
 \frac{ (m-\theta)^{l+1}-1}{m-1-\theta} \  & {\rm if} \ \theta<m-1\\
 l+1 & {\rm if}  \ \theta=m-1\end{array}
 \right. \\
 & \le &
 (m-1) \left\{\begin{array}{ll}
 \frac{ m-\theta}{m-1-\theta} n^{\log_m(m-\theta)} \  & {\rm if} \ \theta<m-1\\
\log_m (mn+1) & {\rm if}  \ \theta=m-1.\end{array}\right.
\end{eqnarray*}
Observe that
 $A^*A =  \|A^* A\|_{\mathcal B} (I-B)$. Hence
$$
 A^{-1}=(A^*A)^{-1}A^* = \|A^* A\|_{\mathcal B}^{-1} \Bigg( \sum_{n=0}^\infty B^n \Bigg) A^*.
$$
This together with  \eqref{banachalgebra.defnew1}, \eqref{tilde-Bn-compu}
and \eqref{inverseA.power} implies that
\begin{eqnarray}\label{inverseA.power}
  \|A^{-1}\|_{\mathcal A}   & \le &    \|A^* A\|_{\mathcal B}^{-1}  \|A^*\|_{\mathcal A}
\sum_{n=0}^\infty \|B^n\|_{\mathcal A}
\nonumber\\
&\le &  \|A^* A\|_{\mathcal B}^{-1}  \|A^*\|_{\mathcal A}\times
\left\{\begin{array}{ll} \sum_{n=0}^\infty (Db)^{\frac{(m-1)(m-\theta)}{m-1-\theta} n^{\log_m(m-\theta)}}  a^{-n}  &   {\rm if} \ \theta<m-1\\
 \sum_{n=0}^\infty (Db)^{(m-1)\log_m (mn+1)}  a^{-n}  &  {\rm if} \ \theta=m-1.
\end{array}\right.
\end{eqnarray}

By direct calculation, we have 
\begin{eqnarray}\label{m-1.estimate}
 \sum_{n=0}^\infty (Db)^{(m-1)\log_m (mn+1)}  a^{-n} &  \le &   a \sum_{n=0}^\infty \int_n^{n+1} (Db)^{(m-1) \log_m (mt+1)} a^{-t} dt\nonumber\\
& \le & a^2 (Db)^{m-1} \int_0^\infty  (t+1)^{(m-1) \log_m (Db)} e^{-(t+1) \ln a} dt\nonumber\\
& \le & a^2 (Db)^{m-1} (\ln a)^{-1} \Gamma\Big(\frac{(m-1) \ln (Db)}{\ln m \ln a}+1\Big).
\end{eqnarray}
This together with  \eqref{inverseA.power} proves \eqref{norm-control1} for $\theta=m-1$.

For $0<\theta<m-1$,  set
$$s(t)=t- \frac{ (m-1)(m-\theta) \ln (Db)}{(m-1-\theta) \ln a} t^{\log_m(m-\theta)}.$$
Observe that
$$
s'(t)=1-  \frac{ (m-1)(m-\theta) \ln (Db)}{(m-1-\theta) \ln a} \log_m(m-\theta) t^{\log_m (1-\theta/m)}.
$$
Therefore
\begin{equation}\label{st.eq1}
\min_{t\ge 0} s(t)=s(t_0)=-\frac{\ln m-\ln (m-\theta)} {\ln (m-\theta)} t_0<0
\end{equation}
and
\begin{equation}\label{st.eq2}
1\ge s'(t)\ge  s'(2t_0)=1-  2^{\log_m (1-\theta/m)} \ \quad \ {\rm for \ all}\ \ t\ge 2t_0,
\end{equation}
where  $t_0$ is given in \eqref {t0.def0}.
By \eqref{st.eq1} and \eqref{st.eq2}, we have
\begin{eqnarray}
 & & \sum_{n=0}^\infty (Db)^{\frac{(m-1)(m-\theta)}{m-1-\theta} n^{\log_m(m-\theta)}}  a^{-n}
  \le   a \sum_{n=0}^\infty \int_n^{n+1}  (Db)^{\frac{(m-1)(m-\theta)}{m-1-\theta} t^{\log_m(m-\theta)}}  a^{-t} dt\nonumber\\
& = & a\Big(\int_0^{2t_0}+\int_{2t_0}^\infty\Big)
\exp(- s(t) \ln a)  dt\nonumber\\
& \le &  2at_0
 \exp(-s(t_0) \ln a) +    (1-  2^{\log_m (1-\theta/m)})^{-1} a \int_{s(2t_0)}^\infty \exp(-u \ln a) du \nonumber\\
 & \le & \Big  (2t_0+(1-  2^{\log_m (1-\theta/m)})^{-1} (\ln a)^{-1}\Big )  a
 \exp\Big(\frac{\ln m-\ln (m-\theta)} {\ln (m-\theta)} t_0 \ln a\Big) .
\end{eqnarray}
Combining the above estimate with  \eqref{inverseA.power} proves \eqref{norm-control1} for $\theta<m-1$.
\end{proof}

For $m=2$, the estimate \eqref{norm-control1} to the inverse $A^{-1}\in {\mathcal A}$  is essentially established in \cite{gkII, gkI} for $\theta=1$ and \cite{shinsun19, sunaicm14} for  $\theta\in (0, 1)$, and a similar estimate is given  in \cite{ samei19}. The reader may refer to \cite{fangshinsun13, grochenigklotz10, sunacha08,  suntams07,
suncasp05} for  norm estimation of the inverse of elements in Banach algebras of infinite matrices and integral operators with certain off-diagonal decay.

\begin{rem}{\rm  A good estimate for the norm control function $h$ in
the norm-controlled inversion \eqref{normcontrol} is important for some mathematical and engineering applications.
For an element $A\in {\mathcal A}$ with $A^{-1}\in {\mathcal B}$, we obtain the following estimate from Theorem \ref{main-thm1}:
\begin{equation}
\|(A^*A)^{-1}\|_{\mathcal A}
 \le
C \|A^* A\|_{\mathcal B}^{-1}  a  (\ln a)^{-1}  \times \left\{\begin{array}{ll}
  t_1  \exp(C t_1)
  & {\rm if} \ \theta<m-1\\
 a  b^{m-1} \exp\Big(C \frac{\ln b}{\ln a} \ln \big(\frac{\ln b}{\ln a}\big)\Big)
&   {\rm if} \ \theta=m-1,
\end{array}\right.
\end{equation}
where  $C$ is an absolute constant independent of $A$ and
$$t_1= (\ln b)^{\ln m/(\ln m-\ln (m-\theta))} (\ln a)^{-\ln (m-\theta)/(\ln m-\ln (m-\theta))}.$$
We remark that the above norm estimate to the inversion is far away from the optimal estimation for our illustrative differential subalgebra
$C^1[a, b]$. In fact,  give any $f\in C^1[a, b]$ being invertible in  $C[a, b]$,
we have
\begin{equation*}
\|1/f\|_{C^1[a, b]}\le
 \| f'\|_{C[a, b]} \|f^{-1} \|_{C[a, b]}^2+ \|1/f \|_{C[a, b]} \le  \|1/f \|_{C[a, b]}^2 \|f\|_{C^1[a, b]}.
 \end{equation*}
 Therefore  $C^1[a, b]$ has norm-controlled inversion in $C[a, b]$ with the norm control function
 $h(s, t)$ in \eqref{normcontrol} being
 $h(s, t)= s t^2$.
Gr\"ochenig and Klotz first considered  norm-controlled inversion
with the norm control function $h$ having polynomial growth, and they show in
\cite{gkII} that the Baskakov-Gohberg-Sj\"ostrand  algebra ${\mathcal C}_{1, \alpha}, \alpha>0$
and  the Jaffard algebra ${\mathcal J}_\alpha, \alpha>1$  have norm-controlled inversion in ${\mathcal B}(\ell^2)$ with
 the norm control function
 $h$ bounded by a polynomial. In \cite{shinsun19}, we  
proved that  the  Beurling algebras  ${\mathcal B}_{p, \alpha}$
 with $1\le p\le \infty$ and $\alpha>1-1/p$ admit  norm-controlled inversion in ${\mathcal B}(\ell^2)$ with
 the norm control function
 bounded by some polynomials.  Following the commutator technique used in \cite{shinsun19, sjostrand94}, we can establish a similar result for
 the Baskakov-Gohberg-Sj\"ostrand  algebras ${\mathcal C}_{p, \alpha}$ with $1\le p\le \infty$ and $\alpha>1-1/p$.

 \begin{thm}\label{polynomial.thm}
 Let $1\le p\le \infty$ and $\alpha>1-1/p$. Then
  the Baskakov-Gohberg-Sj\"ostrand  algebra ${\mathcal C}_{p, \alpha}$
  and the  Beurling algebra  ${\mathcal B}_{p, \alpha}$
  admit  norm-controlled inversion in ${\mathcal B}(\ell^2)$ with
 the norm control function
 bounded by a polynomial.
   \end{thm}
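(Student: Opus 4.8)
We only describe the strategy; the argument runs parallel to the treatment of the Beurling algebras ${\mathcal B}_{p,\alpha}$ in \cite{shinsun19}, and for ${\mathcal C}_{p,\alpha}$ it amounts to replaying Sj\"ostrand's commutator technique \cite{sjostrand94} with the monotone off-diagonal profile $(\sup_{|i-j|\ge|k|}|a(i,j)|u_\alpha(i,j))_{k}$ used there replaced throughout by the diagonal profile $(\sup_{i-j=k}|a(i,j)|u_\alpha(i,j))_{k}$ that defines $\|\cdot\|_{{\mathcal C}_{p,\alpha}}$; the latter is exactly the quantity on which Sj\"ostrand's estimates are built, so only routine changes are needed.

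The plan is as follows. For $\xi\in{\mathbb T}$ let $E_\xi$ be the diagonal unitary on $\ell^2$ multiplying the $j$-th coordinate by $e^{ij\xi}$, and set $A_\xi:=E_\xi AE_\xi^*=(e^{i(i-j)\xi}a(i,j))_{i,j\in{\mathbb Z}}$. Since $u_\alpha(i,j)$ depends only on $i-j$, conjugation by $E_\xi$ is an isometry of both ${\mathcal B}(\ell^2)$ and ${\mathcal C}_{p,\alpha}$; in particular, if $A$ is invertible in ${\mathcal B}(\ell^2)$ then $\|A_\xi^{-1}\|_{{\mathcal B}(\ell^2)}=\|A^{-1}\|_{{\mathcal B}(\ell^2)}$ for every $\xi$. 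The condition $\alpha>1-1/p$ enters twice: through the H\"older embedding ${\mathcal C}_{p,\alpha}\subset{\mathcal C}_{1,0}\subset{\mathcal B}(\ell^2)$, which already places $A^{-1}$ in ${\mathcal C}_{1,0}$ by the inverse-closedness of ${\mathcal C}_{1,0}$, and through a quantitative equivalence between membership of a matrix in ${\mathcal C}_{p,\alpha}$ and an $\alpha$-order smoothness, measured by suitable moduli of continuity in the modulation parameter $\xi$, of its modulation orbit, with the two quantities comparable up to a constant depending only on $p$ and $\alpha$. I would then iterate the resolvent-type identity $A_{\xi+h}^{-1}-A_\xi^{-1}=-A_{\xi+h}^{-1}(A_{\xi+h}-A_\xi)A_\xi^{-1}$, so that a difference of order $\le\lceil\alpha\rceil$ of $\xi\mapsto A_\xi^{-1}$ becomes a finite sum of products in which factors $A_{\xi_i}^{-1}$ alternate with iterated differences of $\xi\mapsto A_\xi$; applying Sj\"ostrand's covering/localization estimate to each such product --- where every factor $A_{\xi_i}^{-1}$ contributes the same norm $\|A^{-1}\|_{{\mathcal B}(\ell^2)}$ --- bounds the ${\mathcal C}_{p,\alpha}$-relevant quantities of these differences by a constant times a polynomial in $\|A\|_{{\mathcal C}_{p,\alpha}}$ and $\|A^{-1}\|_{{\mathcal B}(\ell^2)}$. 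Re-assembling through the smoothness/membership equivalence yields $\|A^{-1}\|_{{\mathcal C}_{p,\alpha}}\le P(\|A\|_{{\mathcal C}_{p,\alpha}},\|A^{-1}\|_{{\mathcal B}(\ell^2)})$ with $P$ a polynomial depending only on $p$ and $\alpha$, which is the assertion for ${\mathcal C}_{p,\alpha}$; the statement for ${\mathcal B}_{p,\alpha}$ is \cite{shinsun19}.

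The hard part --- and the reason Theorem~\ref{main-thm1} does not already give the conclusion --- is keeping the control function polynomial. Theorem~\ref{main-thm1} estimates $A^{-1}$ through the Neumann series of $I-\|A^*A\|_{\mathcal B}^{-1}A^*A$, and the growth of the iterated Brandenburg bounds along that series only produces the sub-exponential, Gamma-function-type estimate \eqref{norm-control1}. The commutator route avoids the Neumann series entirely: the only sources of growth are the combinatorial number of terms in the iterated resolvent identity and a bounded (at most $\lceil\alpha\rceil+1$) number of powers of $\|A^{-1}\|_{{\mathcal B}(\ell^2)}$, so the bound stays polynomial. Making this precise --- in particular calibrating Sj\"ostrand's localization to the diagonal profile of ${\mathcal C}_{p,\alpha}$, handling non-integer $\alpha$ via moduli of continuity, and ensuring that each localization estimate is itself polynomial in the two norms --- is the technical core, and it is exactly the content of \cite{shinsun19} in the Beurling case.
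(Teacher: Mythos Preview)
Your proposal is correct and matches the paper's own treatment: the paper does not give a proof of Theorem~\ref{polynomial.thm} either, but simply states that the result for ${\mathcal B}_{p,\alpha}$ is \cite{shinsun19} and that the case of ${\mathcal C}_{p,\alpha}$ follows by the commutator technique of \cite{shinsun19, sjostrand94}. Your outline of the modulation/resolvent-identity argument is precisely an expansion of that reference, so the approaches coincide.
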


 It is still unknown whether   Gr\"ochenig-Schur algebras ${\mathcal A}_{p, \alpha}, 1\le p<\infty, \alpha>1-1/p$,
  admit norm-controlled inversion in ${\mathcal B}(\ell^q), 1\le q< \infty$, with
 the norm control function  bounded by a polynomial.
In  \cite{gkII}, Gr\"ochenig and Klotz introduce a differential operator  ${\mathcal D}$ on a Banach algebra and
use it to define a differential $*$-algebra  ${\mathcal A}$ of a symmetric $*$-algebra  ${\mathcal B}$, which
  admits  norm-controlled inversion with
 the norm control function
 bounded by a polynomial.  However, the differential algebra in \cite{gkII} does not include
 the  Gr\"ochenig-Schur algebras ${\mathcal A}_{p, \alpha}$,
 the Baskakov-Gohberg-Sj\"ostrand  algebra ${\mathcal C}_{p, \alpha}$
  and the  Beurling algebra  ${\mathcal B}_{p, \alpha}$ with $1\le p<\infty$ and $\alpha>1-1/p$.
   It could be an interesting problem to  extend the conclusions in Theorem \ref{polynomial.thm}
   to general Banach algebras such that
 the  norm control functions in the norm-controlled inversion have polynomial growth.
}
\end{rem}

\begin{rem} {\rm  A crucial step in the
proof of Theorem \ref{main-thm1} is to introduce
  $B:=I- \|A^* A\|_{\mathcal B}^{-1} A^* A\in {\mathcal A}$, whose spectrum is contained in an interval
   on the positive real axis.
   The above reduction  depends on the requirements that
   ${\mathcal B}$ is symmetric and both ${\mathcal A}$ and ${\mathcal B}$ are $*$-algebras with common identity and involution.
For the applications to some mathematical and
engineering fields, the  widely-used algebras ${\mathcal B}$ of infinite matrices and integral operators
are  the operator algebras ${\mathcal B}(\ell^p)$ and  ${\mathcal B}(L^p), 1\le p\le \infty$, which are symmetric only when $p=2$.
In \cite{akramjfa09,  fangshinsun13, shinsun19, shincjfa09,  sunacha08, tesserajfa10}, inverse-closedness of localized matrices and integral operators
in ${\mathcal B}(\ell^p)$ and ${\mathcal B}(L^p), 1\le p\le \infty$, are discussed, and in \cite{fangshinsun20},
Beurling algebras  ${\mathcal B}_{p,\alpha}$ with $1\le p<\infty$ and $\alpha>d(1-1/p)$ are shown to admit polynomial norm-controlled
inversion in nonsymmetric algebras ${\mathcal B}(\ell^p), 1\le p<\infty$.
It is  still widely  open to discuss  Wiener's lemma and
 norm-controlled inversion when  ${\mathcal B}$ and ${\mathcal A}$ are not $*$-algebras and
 ${\mathcal B}$ is not a symmetric  algebra.
}
\end{rem}

\begin{thebibliography}{999}

\bibitem{akramjfa09}
A. Aldroubi, A. Baskakov and I. Krishtal, Slanted matrices, Banach frames,
and sampling, {\em J. Funct. Anal.}, {\bf 255}(2008), 1667--1691.

\bibitem{akramgrochenigsiam}  A. Aldroubi and K. Gr\"ochenig,
Nonuniform sampling and reconstruction in shift-invariant space,
{\em SIAM Review}, {\bf 43}(2001), 585--620.

\bibitem{barnes87} B. A. Barnes, The spectrum of integral operators on Lesbesgue spaces,
{\em J. Operator Theory}, {\bf 18}(1987), 115--132.

\bibitem{baskakov90} A. G. Baskakov,  Wiener's theorem and
asymptotic estimates for elements of inverse matrices, {\em
Funktsional. Anal. i Prilozhen},  {\bf 24}(1990),  64--65;
translation in {\em Funct. Anal. Appl.},  {\bf 24}(1990),
222--224.

\bibitem{belinskiijfaa97} E. S. Belinskii, E. R. Liflyand, and R. M. Trigub,
The Banach algebra $A^*$ and its properties, {\em J. Fourier Anal. Appl.}, {\bf 3}(1997), 103--129.

\bibitem{beurling49} A. Beurling, On the spectral synthesis of bounded functions, {\em Acta Math.},  {\bf 81}(1949),
 225--238.

\bibitem{blackadarcuntz91}
B. Blackadar and J. Cuntz, Differential Banach algebra norms and smooth subalgebras of
$C^*$-algebras,  {\em J. Operator Theory}, {\bf 26}(1991), 255--282.

 \bibitem{bochnerphillips42} S.  Bochner and R. S. Phillips, Absolutely
convergent Fourier expansions for non-commutative normed rings,
{\em Ann. Math.}, {\bf 43}(1942), 409--418.

\bibitem{branden} L. Brandenburg, On idenitifying the maximal ideals in Banach Lagebras, {\em J. Math. Anal. Appl.},
{\bf 50}(1975), 489--510.

\bibitem{chengsun19} C. Cheng, Y. Jiang and Q. Sun, Spatially distributed sampling and reconstruction, {\em Appl. Comput. Harmonic Anal.}, {\bf 47}(2019), 109--148.

 \bibitem{christensen05}   O. Christensen and T. Strohmer, The finite section method and problems in frame theory, {\em  J. Approx.
Theory},  {\bf  133}(2005), 221--237.

\bibitem{christ88} M. Christ, Inversion in some algebra of singular integral operators,
{\em Rev. Mat. Iberoamericana}, {\bf 4}(1988), 219--225.

 \bibitem{douglasbook} R. G. Douglas,
{\em Banach Algebra Techniques in Operator Theory}  (Graduate Texts in Mathematics Book 179), Springer; 2nd edition, 1998.

\bibitem{fangshinsun20} Q. Fang, C. E. Shin and Q. Sun, 
Polynomial control on weighted stability bounds and inversion norms of localized matrices on simple graphs,
 arXiv preprint arXiv:1909.08409, 2019.

\bibitem{fangshinsun13}
Q. Fang, C. E. Shin and Q. Sun, Wiener's lemma for singular integral operators of Bessel potential type, {\em Monatsh. Math.}, {\bf 173}(2014), 35-54.

\bibitem{gelfandbook} I. M. Gelfand, D. A. Raikov, and G. E. Silov,
{\em Commutative Normed Rings}, New York: Chelsea 1964.

\bibitem{gkwieot89} I. Gohberg, M. A. Kaashoek, and H. J.
Woerdeman, The band method for positive and strictly contractive
extension problems: an alternative version and new applications,
{\em Integral Equations Operator Theory}, {\bf 12}(1989),
343--382.

\bibitem{grochenig10}
K. Gr\"ochenig, Wiener's lemma: theme and variations, an introduction to
spectral invariance and its applications, In {\em Four Short Courses on Harmonic Analysis: Wavelets, Frames, Time-Frequency Methods, and
Applications to Signal and Image Analysis}, edited by P. Massopust and B. Forster,
Birkhauser, Boston 2010.

\bibitem{gkII} K. Gr\"ochenig and A. Klotz, Norm-controlled inversion in smooth Banach algebra II,
{\em Math. Nachr.}, {\bf 287}(2014), 917-937.

\bibitem{gkI} K. Gr\"ochenig and A. Klotz, Norm-controlled inversion in smooth Banach algebra I,
{\em J. London Math. Soc.}, {\bf 88}(2013), 49--64.

\bibitem{grochenigklotz10} K. Gr\"ochenig and A. Klotz,
Noncommutative approximation: inverse-closed subalgebras and off-diagonal decay of matrices,
{\em Constr. Approx.}, {\bf 32}(2010), 429--466.

\bibitem{gltams06} K. Gr\"ochenig and M. Leinert, Symmetry of matrix
algebras and symbolic calculus for infinite matrices, {\em Trans.
Amer. Math. Soc.},  {\bf 358}(2006),  2695--2711.

    \bibitem{grochenigr10} K. Gr\"ochenig, Z. Rzeszotnik, and T.
Strohmer,  Convergence analysis of the finite section method and
Banach algebras of matrices, {\em  Integral Equ. Oper. Theory}, {\bf 67}(2010), 183--202.

\bibitem{hulanicki} A. Hulanicki, On the spectrum of convolution
operators on groups with polynomial growth, {\em Invent. Math.},
{\bf 17}(1972), 135--142.

\bibitem{jaffard90} S. Jaffard, Properi\'et\'es des matrices bien
localis\'ees pr\'es de leur diagonale et quelques applications,
{\em Ann. Inst. Henri Poincar\'e}, {\bf 7}(1990), 461--476.

\bibitem{kissin94} E. Kissin and V. S. Shulman. Differential properties of some dense subalgebras of $C^*$-algebras, {\em  Proc.
Edinburgh Math. Soc.} {\bf 37}(1994),  399--422.

\bibitem{Krishtal11} I. Krishtal, Wiener's lemma: pictures at exhibition,
{\em Rev. Un. Mat. Argentina}, {\bf 52}(2011), 61--79.

\bibitem{ksw13} I. Krishtal, T. Strohmer and T. Wertz, Localization of matrix factorizations, {\em Found. Comp. Math.}, {\bf 15}(2015), 931--951.

\bibitem{moteesun} N. Motee and Q. Sun, Sparsity and spatial localization measures for spatially distributed systems,
{\em SIAM J. Control Optim.}, {\bf 55}(2017), 200--235.

\bibitem{Naimarkbook} M. A. Naimark, {\em Normed Algebras},
Wolters-Noordhoff Publishing Groningen, 1972.

\bibitem{nikolski99} N. Nikolski,  In search of the invisible spectrum, {\em  Ann. Inst. Fourier (Grenoble)},  {\bf 49}(1999), 1925--1998.

\bibitem{rieffel10}  M. A. Rieffel, Leibniz seminorms for ``matrix algebras converge to the sphere". In {\em Quanta of maths,
volume 11 of Clay Math. Proc.},  Amer. Math. Soc., Providence, RI, 2010. pages 543--578.

\bibitem{rssun12} K. S. Rim, C. E. Shin and Q. Sun, Stability of localized integral operators on weighted $L^p$ spaces,
{\em Numer.  Funct.  Anal. Optim.},  {\bf 33}(2012), 1166--1193.

\bibitem{samei19} E. Samei and V. Shepelska, Norm-controlled inversion in weighted convolution algebra,  {\em J. Fourier Anal. Appl.},
{\bf 25}(2019), 3018--3044.

\bibitem{schur11} I. Schur, Bemerkungen zur theorie der beschrankten bilinearformen mit unendlich
vielen veranderlichen, {\em J. Reine Angew. Math.}, {\bf 140}(1911), 1--28.

 \bibitem{shinsun19} C. E. Shin and Q. Sun,
Polynomial control on stability, inversion and powers of matrices on simple graphs,
	 {\em J. Funct. Anal.}, {\bf 276}(2019), 148--182.

 \bibitem{shinsun13} C. E. Shin and Q. Sun,
Wiener's lemma: localization and various approaches, {\em Appl. Math. J. Chinese Univ.}, {\bf 28}(2013), 465--484.

\bibitem{shincjfa09} C. E. Shin and Q. Sun,
 Stability of localized operators, {\em J. Funct. Anal.}, {\bf 256}(2009), 2417--2439.

\bibitem{sjostrand94} J. Sj\"ostrand, Wiener type algebra of pseudodifferential operators, Centre
de Mathematiques, Ecole Polytechnique, Palaiseau France, Seminaire 1994,
1995, December 1994.

\bibitem{sunaicm14} Q. Sun, Localized nonlinear functional equations and two sampling problems in signal processing,
{\em Adv. Comput. Math.},  {\bf 40}(2014), 415--458.

\bibitem{sunca11}
Q. Sun, Wiener's lemma for infinite matrices II, {\em Constr. Approx.}, {\bf 34}(2011), 209--235.

\bibitem{sunacha08} Q.  Sun, Wiener's lemma for localized integral operators, {\em Appl. Comput. Harmonic Anal.}, {\bf 25}(2008), 148--167.

\bibitem{suntams07} Q. Sun, Wiener's lemma for infinite matrices,
{\em Trans. Amer. Math. Soc.},  {\bf 359}(2007), 3099--3123.

\bibitem{sunsiam06} Q. Sun,  Non-uniform sampling and reconstruction  for signals with  finite rate of
innovations,  {\em SIAM J. Math. Anal.}, {\bf 38}(2006),
1389--1422.

\bibitem{suncasp05} Q. Sun, Wiener's lemma for infinite matrices with polynomial off-diagonal decay, {\em C. Acad. Sci. Paris Ser I}, {\bf 340}(2005), 567--570.

\bibitem{takesaki}  M. Takesaki, {\em Theory of Operator Algebra I},
Springer-Verlag, 1979.

\bibitem{tessera10} R.  Tessera, The Schur algebra is not spectral in ${\mathcal B}(\ell^2)$,
{\em  Monatsh. Math.}, {\bf  164}(2010), 115--118.

\bibitem{tesserajfa10} R. Tessera,  Left inverses of matrices with polynomial decay, {\em  J. Funct. Anal.}, {\bf 259}(2010),  2793--2813.

\bibitem{wiener32}
 N. Wiener,  Tauberian theorem, {\em Ann. Math.}, {\bf 33}(1932), 1--100.

\end {thebibliography}
\end{document}